\documentclass{amsart}

%\usepackage[dvipdfmx]{graphicx,xcolor}

%     If your article includes graphics, uncomment this command.
\usepackage{float}
\usepackage{graphicx}

\usepackage{tikz}
\usepackage{tikz-cd}

\usetikzlibrary{snakes}
\usepackage[all]{xy}
\usepackage{framed}
\usepackage{mathrsfs}%For mathscr
\usepackage{amssymb, amsmath}
\usepackage{a4wide}

%\usetikzlibrary{positioning}

\theoremstyle{plain}
\newtheorem{theorem}{Theorem}[section]
\newtheorem{lemma}[theorem]{Lemma}
\newtheorem{corollary}[theorem]{Corollary}
\newtheorem{proposition}[theorem]{Proposition}

\theoremstyle{definition}
\newtheorem{definition}[theorem]{Definition}
\newtheorem{example}[theorem]{Example}

\theoremstyle{remark}
\newtheorem{remark}[theorem]{Remark}

\numberwithin{equation}{section}

% symbol of number system
\newcommand{\C}{\mathbb{C}}

\newcommand{\R}{\mathbb{R}}
\newcommand{\Z}{\mathbb{Z}}

\newcommand{\mc}{\mathcal}

\usepackage{color}

%    Absolute value notation

%    Blank box placeholder for figures (to avoid requiring any
%    particular graphics capabilities for printing this document).

\begin{document}

\title{Torus orbifolds with two fixed points}

\author{Alastair Darby}
\address{Department of Mathematical Sciences, Xi'an Jiaotong-Liverpool University, 111 Ren'ai Road,
Suzhou, 215123, Jiangsu Province, China.}
\curraddr{}
\email{Alastair.Darby@xjtlu.edu.cn}

\author{Shintaro Kuroki}
\address{Department of Applied Mathematics, Okayama University of Science, 1-1 Ridai-cho Kita-ku Okayama-shi Okayama 700-0005, Japan}
\curraddr{}
\email{kuroki@xmath.ous.ac.jp}

\author{Jongbaek Song}
\address{Department of Mathematical Sciences, KAIST, Daehak-ro 291, Daejeon 34141, Republic of Korea}
\curraddr{}
\email{jongbaek.song@gmail.com}

\thanks{The second author was supported by JSPS KAKENHI Grant Number 17K14196. 
The third author has been supported by the POSCO Science Fellowship of POSCO TJ Park Foundation and Basic Science Research Program through the National Research Foundation of Korea (NRF) funded by the Ministry of Education (NRF-2018R1D1A1B07048480).}

\keywords{orbifold; torus action; torus orbifold; equivariant cohomology; face ring}

\subjclass[2010]{Principal: 55N32, Secondly: 57R18; 13F55}

\maketitle

\begin{abstract}
The main objects of this paper are torus orbifolds that have exactly two fixed points. We study the equivariant topological type of these orbifolds and consider when we can use the results of~\cite{DKS} to compute its integral equivariant cohomology, in terms of generators and relations, coming from the corresponding orbifold torus graph.
\end{abstract}

%%%%%%%%%%%%%%%%%%%%%%%%%%%%%%%%%%%%%%%%%%%%%
%%% Section 1
%%%%%%%%%%%%%%%%%%%%%%%%%%%%%%%%%%%%%%%%%%%%%
\section{Introduction}
The relationship between the equivariant cohomology of certain smooth manifolds $M^{2n}$, with half-dimensional torus actions $\mathbb{T}^n=S^1\times\dots\times S^1$, and the combinatorics of their quotient spaces is well-known. A toric manifold, defined as a compact non-singular toric variety, admits a locally standard action of a half-dimensional torus. This implies that its quotient space is a manifold with faces. The equivariant cohomology is then simply given as the \emph{face-ring} of this quotient space. This can also be realized as the face-ring of the complete regular fan associated to the toric variety. A quasitoric manifold, first defined in~\cite{DJ}, is a topological generalization of a toric manifold whose quotient space is a simple polytope (an example of a manifold with faces) but which is more general than a smooth projective toric variety. Its equivariant cohomology is then given by the face-ring associated to the simple polytope.

Torus manifolds, appearing in~\cite{HM}, are a wider class of manifolds $M^{2n}$, with half-dimensional torus action, that contain both toric and quasitoric manifolds. The main example of a torus manifold that is neither a toric nor a quasitoric manifold is the even-dimensional sphere $S^{2n}\subset \C^n\oplus \R$, where the torus acts coordinatewise on the first $n$~coordinates. To any torus manifold we can associate a combinatorial object, called a \emph{torus graph} (see~\cite{MMP}), which is an $n$-valent graph whose vertices correspond to the fixed points of $M$ and whose edges are labelled by irreducible torus representations. The underlying graph for $S^{2n}$ is the one with exactly two vertices and $n$ edges between them. We can then calculate the \emph{graph equivariant cohomology} of the torus graph, which is a ring of \emph{piecewise polynomials} on the graph. When the ordinary cohomology of $M$ is trivial in all odd degrees, then the torus action is locally standard and the equivariant cohomology of $M$ is isomorphic to the graph cohomology of the associated torus graph. It is also possible to give explicit generators and relations for this ring via \emph{Thom classes} of the torus graph and we can see that this is isomorphic to the face-ring of the quotient space which is, in this case, a manifold with faces.

When we move from manifolds to orbifolds the picture slightly changes. In the case of singular toric varieties, including toric varieties having orbifold singularities,  it was proved in \cite{BFR2,F} that its equivariant cohomology with integer coefficients is given by the piecewise polynomials on its fan if its ordinary odd degree cohomology vanishes. For toric manifolds the ring of piecewise polynomials on the fan is isomorphic to the face-ring of its quotient but this is not true for orbifolds in general. In \cite{BSS} it is shown that the equivariant cohomology of projective toric orbifolds, under the condition of vanishing odd degree cohomology, can be realized as a subring of the usual face-ring of the fan that satisfies an \emph{intergrality condition}.

As a generalization of torus manifolds, Hattori--Masuda~\cite{HM} introduced a notion of \emph{torus orbifolds} which are $2n$-dimensional, closed and oriented orbifold 
with an effective $n$-dimensional torus $T^n$-action whose fixed point set is nonempty. In this paper, we mainly focus on the topology of a torus orbifold having exactly two 
isolated fixed points.

In Section~2 we consider the main combinatorial object of this paper which is a \emph{manifold with faces} that has exactly two vertices. The suspension of a simplex gives the simplest example of such an object and we show that, up to \emph{combinatorial equivalence}, these are the only ones. Torus orbifolds are discussed in Section~3 and we show how one can be constructed from a manifold with faces, with some additional information, using a \emph{quotient construction.}

We restrict our attention to torus orbifolds over the suspension of a simplex in Section~4  and discuss their equivariant topological type. More specifically we show that every such torus orbifold is equivariantly homeomorphic to the quotient of an even dimensional sphere by a product of cyclic groups. Interestingly, we also find a condition on the combinatorial data for when this torus orbifold is equivariantly homeomorphic to an even dimensional sphere.

In Section~5 we give the main result from~\cite{DKS} which computes the equivariant cohomology of a torus orbifold $X$ whose odd degree ordinary cohomology is trivial. To do this we associate to each torus orbifold over a manifold with faces a labelled graph, called a \emph{torus orbifold graph}, which generalizes the torus graphs of~\cite{MMP}. We can then describe the equivariant cohomology of $X$ as the ring of piecewise polynomials on the associated torus orbifold graph if $H^{\text{odd}}(X)=0$. We show that this is isomorphic to a \emph{weighted face ring} that gives us a description of $H^*_T(X)$ in terms of generators and relations. Restricting to torus orbifolds $X$ over the suspension of a simplex we compute these ring structures explicitly. 

%%%%%%%%%%%%%%%%%%%%%%%%%%%%%%%%%%%%%%%%%%%%%
%%% Section 2
%%%%%%%%%%%%%%%%%%%%%%%%%%%%%%%%%%%%%%%%%%%%%
\section{Manifold with two vertices}
\label{sect:2}

In this first section, we introduce the combinatorial object which we deal with in this paper. We refer to the book \cite[Section 7.1]{BP} for the notation used in this section.
Let $Q_{1}$, $Q_{2}$ be $n$-dimensional manifolds with faces.
If there is a homeomorphism $f:Q_{1}\to Q_{2}$ which preserves faces for two manifolds with faces $Q_{1}$ and $Q_{2}$, then we call $Q_{1}$ and $Q_{2}$ \textit{isomorphic} (in the sense of manifolds with faces).
We can also define a weaker equivalence relation called {\it combinatorial equivalence} among manifolds with faces as follows.
We may regard the set of faces of a manifold with faces $Q$ as a partially ordered set by the inclusions of faces, say $\mathcal{S}(Q)$.
If there is a bijective map between $\mathcal{S}(Q_{1})$ to $\mathcal{S}(Q_{2})$ which preserves the order, then we call two manifolds with faces $Q_{1}$ and $Q_{2}$ {\it combinatorially equivalent}.
It is easy to check that if $Q_{1}$ and $Q_{2}$ are isomorphic, then $Q_{1}$ and $Q_{2}$ are combinatorially equivalent.
However, the converse is not true, see Remark~\ref{connected-sum-construction}.
If a manifold with faces $Q$ has exactly two vertices (i.e., $0$-dimensional faces),
we call such $Q$ a \textit{manifold with two vertices} for short.

Let $Q$ be an $n$-dimensional manifold with two vertices.
By the definition of $Q$,  
the following properties hold: if $n>1$, 
\begin{enumerate}
\item there exist exactly $n$ facets, say $\mathfrak{F}(Q)=\{F_{1},\ldots, F_{n}\}$;
\item if $0<k<n$, the intersection $\bigcap_{j=1}^{k}F_{i_{j}}$ is connected, i.e., a codimension-$k$ face of $Q$;
\item conversely, for every codimensional-$k$ face $H$ ($k\not=0,n$), there exist exactly $k$ distinct facets $F_{i_{1}},\ldots, F_{i_{k}}$ such that $H=\bigcap_{j=1}^{k}F_{i_{j}}$;
\item $\bigcap_{j=1}^{n}F_{j}=\{p,q\}$ (the set of all vertices of $Q$).
\end{enumerate}
If $n=1$, $Q$ is nothing but the interval $[-1,1]$, i.e., there are only two facets $\{-1\}$ and $\{1\}$.

The typical example of an $n$-dimensional manifold with two vertices  for $n \ge 1$ is the suspension $\Sigma \Delta^{n-1}$ 
of the $(n-1)$-dimensional simplex $\Delta^{n-1}$ (see Figure~\ref{fig_susp_of_simp} for the case when $n=2$ and $n=3$).
More precisely, in this paper, $\Sigma \Delta^{n-1}$ is defined by
\begin{align*}
\Delta^{n-1}\times [-1,1]/\sim,
\end{align*}
where the equivalence relation $\sim$ is defined by collapsing $\Delta^{n-1}\times \{-1\}$ (resp. $\Delta^{n-1}\times \{1\}$) to the vertex $p$ (resp.~$q$).
Note that a codimension-$k$ face $H$ of $\Sigma \Delta^{n-1}$, for $k=0,\ldots, n-1$, is determined by 
the suspension $\Sigma F$ of some codimension-$k$ face $F$ in $\Delta^{n-1}$ and codimension-$n$ faces are the two vertices $p,\ q$ in $\Sigma \Delta^{n-1}$.
\begin{figure}
\label{suspension}
\begin{tikzpicture}
\begin{scope}[xscale=1.3]
\node[right] at (0,0) {$\Delta^1$};
\draw[dotted, thick] (0,1)--(0,-1);

\draw[fill=yellow, opacity=0.5] (0,0) circle [radius=1];
\node at (-1,0) {$\bullet$};
\node at (1,0) {$\bullet$};
\node[right] at (0,0) {$\Delta^1$};
\node at (0,-1.5) {$\Sigma \Delta^1$};

\node at (0,-1.5) {$\Sigma \Delta^1$};
\end{scope}

\begin{scope}[xshift=170]
\draw[dotted, thick, fill=red!50] (-0.3,0.94) -- (0.3, 0.47)--(0,-1)--cycle;
\node[left] at (-2,-1/2) {$\Delta^2$};
\draw[dashed, ->] (-2, -1/2)--(0,0);
\draw[fill=yellow, opacity=0.5] (-1.3,0) [out=75, in=180] to (0,1) [out=0, in=105] to (1.3,0)
	[out=360-105, in=0] to (0,-1) [out=180, in=-75] to (-1.3, 0) ;
\draw (-1.3,0) [out=75, in=180] to (0,1) [out=0, in=105] to (1.3,0)
	[out=360-105, in=0] to (0,-1) [out=180, in=-75] to (-1.3, 0) ;
%\draw (-1.3,0) [out=-75, in=180] to (0,-1) [out=0, in=-105] to (1.3,0);
\draw (-1.3,0) [out=45, in=180] to (0,1/2) [out=0, in=135] to (1.3,0);
\fill (-1.3,0) circle (0.07);
\fill (1.3,0) circle (0.07);
\node at (0,-1.5) {$\Sigma \Delta^2$};
\end{scope}
\end{tikzpicture}
\caption{Suspensions of simplices}\label{fig_susp_of_simp}
\end{figure}
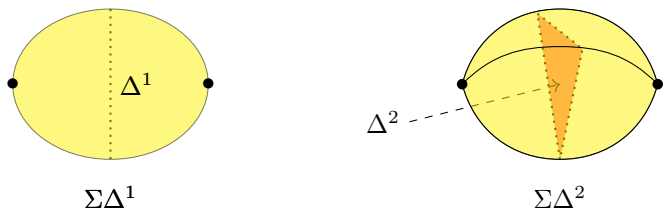

%%%%%%%%%%%%%%%%%%%%%%%%%%%%%%%%%%%%%%%%%%%%%
%%% Proposition 2.1
%%%%%%%%%%%%%%%%%%%%%%%%%%%%%%%%%%%%%%%%%%%%%
\begin{proposition}
\label{comb-str-mfdw2}
Let $Q$ be an $n$-dimensional manifold with two vertices. 
Then $Q$ is combinatorially equivalent to $\Sigma \Delta^{n-1}$. 
\end{proposition}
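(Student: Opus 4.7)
The plan is to construct an explicit isomorphism $\phi \colon \mathcal{S}(Q) \to \mathcal{S}(\Sigma\Delta^{n-1})$ of face posets by indexing every face of each space of codimension strictly between $0$ and $n$ by a proper nonempty subset of $[n] := \{1,\dots,n\}$. The first step is to use properties (1)--(3) to show that $I \mapsto F_I := \bigcap_{i \in I} F_i$ is a bijection from such subsets onto the codimension-$|I|$ faces of $Q$. Property (2) shows $F_I$ is a well-defined face of codimension $|I|$; property (3) gives surjectivity; injectivity follows because $F_I = F_J$ implies $F_{I \cup J} = F_I \cap F_J = F_I$, so by (2) we get $|I \cup J| = |I|$ and hence $I = J$.

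Next I would verify that this indexing is order-reversing: $F_J \subseteq F_I$ if and only if $I \subseteq J$. The ``if'' direction is immediate. For the converse, $F_J \subseteq F_I$ forces $F_J \subseteq F_i$ for every $i \in I$, whence $F_{J \cup \{i\}} = F_J \cap F_i = F_J$, and the injectivity from the previous step gives $i \in J$. The identical analysis applies to $\Sigma\Delta^{n-1}$: its $n$ facets are $\tilde F_i := \Sigma \sigma_i$, where $\sigma_i$ is the facet of $\Delta^{n-1}$ opposite the vertex $v_i$, and for any proper nonempty $I \subsetneq [n]$ one has $\bigcap_{i \in I} \tilde F_i = \Sigma\bigl(\bigcap_{i \in I} \sigma_i\bigr)$, which is connected of codimension $|I|$ in $\Sigma\Delta^{n-1}$.

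It remains to treat the two vertices. Property (4) places both $p$ and $q$ in every facet $F_i$, hence in every $F_I$; analogously, the two cone points of $\Sigma\Delta^{n-1}$ lie in every face of positive codimension. So in both posets the vertex set consists of two minimal elements sitting below every other face. Setting $\phi(Q) = \Sigma\Delta^{n-1}$, $\phi(F_I) = \tilde F_I$, and choosing any bijection between the two vertex sets therefore produces the required poset isomorphism. The only real obstacle is extracting the uniqueness of the indexing subset $I$ from the ``exactly $k$'' clause in (3) together with (2); once that rigidity is in hand, matching the two posets is automatic. The degenerate case $n=1$, where $Q = [-1,1] = \Sigma\Delta^0$, is noted separately in the statement.
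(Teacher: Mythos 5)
Your proof is correct and follows essentially the same route as the paper: the paper's (one-line) proof likewise observes that properties (1)--(4) force every face of codimension strictly between $0$ and $n$ to be an intersection $\bigcap_{i\in I}F_i$ indexed by a proper nonempty $I\subseteq[n]$, with the two vertices below everything, which is exactly the face poset of $\Sigma\Delta^{n-1}$. You have simply filled in the details (the bijectivity and order-reversal of $I\mapsto F_I$) that the paper leaves implicit.
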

\begin{proof}
Let $\{F_{1},\ldots, F_{n}\}$ be the facets of $Q$.
Note that all of the lower dimensional facets are determined by the intersection of the $F_{i}$'s except the two vertices.
This combinatorial structure is exactly the same as that of $\Sigma \Delta^{n-1}$ discussed above.
\end{proof}

%%%%%%%%%%%%%%%%%%%%%%%%%%%%%%%%%%%%%%%%%%%%%
%%% Remark 2.2
%%%%%%%%%%%%%%%%%%%%%%%%%%%%%%%%%%%%%%%%%%%%%
\begin{remark}
\label{connected-sum-construction}
We define the topology on $\Delta^{n-1}$ by the induced topology from $\mathbb{R}^{n}_{\ge}$, i.e., $\Delta^{n-1}:=\{(x_{1},\ldots,x_{n})\in \mathbb{R}_{\ge}^{n}\ |\ \sum_{i=1}^{n}x_{i}=1\}$.
By the connected sum with a homology sphere and $\Sigma \Delta^{n-1}$, we can construct  
an $n$-dimensional manifold with two vertices which is not {\it isomorphic} to $\Sigma \Delta^{n-1}$ (also see \cite{D-coxeter}).
\end{remark}

%%%%%%%%%%%%%%%%%%%%%%%%%%%%%%%%%%%%%%%%%%%%%
%%% Section 3
%%%%%%%%%%%%%%%%%%%%%%%%%%%%%%%%%%%%%%%%%%%%%

\section{Quotient construction of torus orbifolds}
\label{sect:3}
In this section, we briefly recall the quotient construction of a $2n$-dimensional torus orbifold. 
As an introduction to the subject we recommend \cite{ALR} for the basic facts regarding orbifolds and the papers \cite{GKRW, HM} to do with torus orbifolds in particular.
In this paper, a compact, connected, commutative, Lie group is called a {\it torus}, often denoted by $T$.
We use the following notation regarding a torus $T$:
\begin{itemize}
\item $\mathfrak{t}:={\rm Lie}(T)$, the Lie algebra of $T$;
\item $\mathfrak{t}_{\mathbb{Z}}:={\rm exp}^{-1}(e)\subset \mathfrak{t}$ is the lattice, where $e\in T$ is the identity element and ${\rm exp}:\mathfrak{t}\to T$ is the exponential map;
\item $\mathfrak{t}_{\mathbb{Q}}:=\mathfrak{t}_{\mathbb{Z}}\otimes_{\mathbb{Z}}\mathbb{Q}$.
\end{itemize}
We also use the following symbol for the {\it standard $n$-dimensional torus}:
\begin{align*}
\mathbb{T}^{n}:=\{(z_{1},\ldots, z_{n})\in \mathbb{C}^{n}\ |\ |z_{1}|=\cdots =|z_{n}|=1\}.
\end{align*}
It is well-known that every $n$-dimensional torus $T$ is isomorphic to $\mathbb{T}^{n}$.
So we note that the quotient by a finite subgroup $F$ of $\mathbb{T}^{n}$, i.e., $F$ is isomorphic to a product of cyclic groups, is always isomorphic to an $n$-dimensional torus.
%%%%%%%%%%%%%%%%%%%%%%%%%%%%%%%%%%%%%%%%%%%%%
%%% Remark 3.1
%%%%%%%%%%%%%%%%%%%%%%%%%%%%%%%%%%%%%%%%%%%%%
\begin{remark}
When we consider a torus action on an orbifold, it is quite natural to distinguish between the standard torus $\mathbb{T}$ and $T=\mathbb{T}/F$, though they are isomorphic as Lie groups.
We shall discuss this again in Section 4.
\end{remark}

A torus orbifold defined in \cite{HM} is as follows:
%%%%%%%%%%%%%%%%%%%%%%%%%%%%%%%%%%%%%%%%%%%%%
%%% Definition 3.2
%%%%%%%%%%%%%%%%%%%%%%%%%%%%%%%%%%%%%%%%%%%%%
\begin{definition}
A $2n$-dimensional closed oriented orbifold $X$ with an effective $n$-dimensional torus $T$-action is called a {\it torus orbifold} if it has a nonempty fixed point set $X^{T}$.
\end{definition}
A connected component of the fixed point set of a circle
subgroup $S_{i}$ of $T$ is a suborbifold, say $X_{i}$.
This suborbifold $X_{i}$ is called a {\it characteristic suborbifold} 
if $X_{i}$ is a $(2n-2)$-dimensional orbifold and contains at least one fixed point of the $T$-action. 
In the definition of torus orbifolds in \cite{HM}, 
we also need to choose an ``invariant normal orientation'' for every characteristic suborbifold $X_{i}$.
We will discuss this again for the case of torus orbifolds defined by the quotient construction.

Let $Q$ be an $n$-dimensional manifold with faces.
We write the set of facets of $Q$ as 
\begin{align*}
\mathfrak{F}(Q):=\{F_1, \dots, F_m\}.
\end{align*}
Let $T$ be an $n$-dimensional torus.
We identify the lattice $\mathfrak{t}_{\mathbb{Z}}$ with $\mathbb{Z}^{n}$.
A function 
\begin{align*}
\lambda \colon \mathfrak{F}(Q) \to \mathbb{Z}^{n}
\end{align*}
is called a \textit{characteristic function} if 
it satisfies the following condition:
\begin{align*}
\tag{$\ast$}
\text{  
$\{\lambda(F_{i_1}), \dots, \lambda(F_{i_k})\}$ is linearly independent 
whenever $F_{i_1}\cap \dots \cap F_{i_k} \neq \emptyset$, for $1\le k\le n$.} 
\end{align*}
We call $(Q,\lambda)$ an {\it orbifold characteristic pair}.

We note that for each point $x$ in $Q$, there is a unique face $F(x)$ of $Q$ containing $x$ in its relative interior. 
Condition $(\ast)$ allows us to define a torus 
\begin{align*}
T_{F(x)} := 
\left(
\mathbb{R}\lambda(F_{i_1})\times \cdots\times \mathbb{R}\lambda(F_{i_k})\right)/
\left(
\mathbb{Z}\lambda(F_{i_1})\times \cdots\times \mathbb{Z}\lambda(F_{i_k})\right)
\end{align*}
if $F(x)=(F_{i_1} \cap \dots \cap F_{i_k})^{o}$, where 
the symbol $F^{o}$ denotes a connected component of the set $F$.
Now we define the following quotient space: 
\begin{equation*}
\label{eq_Definition_of_torus_orb}
X(Q, \lambda):=(Q \times T) / \sim,
\end{equation*}
where the equivalence relation $\sim$ is given by 
\begin{equation*} 
(x,t) \sim (y,s) \text{ if and only if } x=y \text{ and } t^{-1}s \in T_{F(x)}.
\end{equation*}
Note that if $x\in Q$ is a vertex then $T_{F(x)}\cong T$.

It is easy to check that $X:=X(Q, \lambda)$ has the natural $T$-action induced from the multiplication of $T$ on the second factor of 
$Q\times T$, and 
its orbit projection $X\to X/T$ is induced from the 
projection onto the first factor $Q\times T\to Q$, i.e., $X/T$ is equipped with the structure of a manifold with faces by the induced homeomorphism $X/T\simeq Q$.
Note that if $F(q)$ (for $q\in Q$) is a codimension-$k$ face, then the inverse $\pi^{-1}(q)$ is an $(n-k)$-dimensional orbit which is homeomorphic to $T/T_{F(q)}$.
More precisely,
one can define an orbifold structure on  
$X(Q, \lambda)$ in a similar fashion to \cite[Section 2.1]{PS} (see \cite{DKS} for a more precise account).
Moreover, one can see that $\{\pi^{-1}(p)\mid \text{$p$ is a vertex of $Q$}\}$ is the set of fixed points.
Hence, we conclude that $X(Q, \lambda)$ is a torus orbifold.

Conversely, let $X$ be a $2n$-dimensional torus orbifold such that the orbit space $X/T$ is isomorphic to some $n$-dimensional manifold with faces $Q$.
The face structure on $Q$ is determined by inclusion relation for isotropy subgroups. Hence, for the orbit map $\pi:X\to X/T\simeq Q$, the preimage $X_{i}:=\pi^{-1}(F_{i})$ 
of a facet $F_{i}\in \mathfrak{F}(Q)$
is a characteristic suborbifold $X_{i}$ for every facet $F_{i}\in \mathfrak{F}(Q)$.
Let $S_{i}$ be the circle subgroup of $T$ fixing $X_{i}$.
Then, we can define the characteristic function $\lambda:\mathfrak{F}(Q)\to \mathbb{Z}^{n}$ by a choice of a nonzero vector $v_{i}\in \mathbb{Z}^{n}\cong \mathfrak{t}_{\mathbb{Z}}$ such that $S_{i}={\rm exp}\mathbb{R}v_{i}$.

Note that there are infinitely many choices of such nonzero vectors, i.e. if we take a primitive vector $s_{i}$ such that $S_{i}={\rm exp}\mathbb{R}s_{i}$, then for any $r\in \mathbb{Z}\setminus \{0\}$ the equality $S_{i}={\rm exp}\mathbb{R} (r s_{i})$ holds.
In order to determine the nonzero vector, we need to choose a ``normal orientation'' of $X_{i}$ in the following way.
Due to \cite[Lemma 12.1]{HM} (also see \cite[Proposition 2.12]{GKRW}), for every $x\in X_{i}$ there is a special chart $(U_{x}, V_{x}, H_{x}, p_{x})$ around $x$, also called a good local chart in \cite{ALR}, where $U_{x}$ is a neighborhood of $x$, $V_{x}$ is an open subset of $\mathbb{R}^{n}$ and $p_{x}: V_{x}\to U_{x}$ is an $H_{x}$-equivariant map which induces a homeomorphism $V_{x}/H_{x}\cong U_{x}$, where the finite group $H_{x}$ acts on $V_{x}$ as $H_{x}\subset O(V_{x})\simeq O(2n)$.
It satisfies the following properties:
\begin{enumerate}
\item the tangent space $T_{\widetilde{x}}V_{x}$ of $\widetilde{x}=p_{x}^{-1}(x)$ in $V_{x}$, identifying $V_{x}=T_{\widetilde{x}}V_{x}(\simeq \mathbb{R}^{2n})$, 
splits into $W_{ix}\oplus W_{ix}^{\perp}$, where 
$W_{ix}$ is tangent to $p_{x}^{-1}(U_{x}\cap X_{i})$ and $W_{ix}^{\perp}$ may be regarded as the normal vector space of $x\in X_{i}$;
\item $H_{x}$ acts on $W_{ix}^{\perp}$;
\item there is a (connected) finite cover $\widetilde{S}_{i}$ of $S_{i}$ and a lifting of the action of $S_{i}$ to the action of $\widetilde{S}_{i}$ on $V_{x}$
for any point $x\in X_{i}$;
\item the lifted action of $\widetilde{S}_{i}$ acts on the normal vector space $W_{ix}^{\perp}(\subset V_{x})$ non-trivially;
\item $\widetilde{S}_{i}$ acts on $V_{x}=W_{ix}\oplus W_{ix}^{\perp}$ effectively.
\end{enumerate}

We choose an orientation of $W_{ix}^{\perp}$ for every $x\in X_{i}$ which 
we call a {\it normal orientation} of $X_{i}$.
On the other hand, there are exactly two primitive vectors $s_{i}$ and $-s_{i}$ such that ${\rm exp}\mathbb{R}s_{i}={\rm exp}\mathbb{R}(-s_{i})=S_{i}$.
Note that the choice of a sign determines the orientation of $S_{i}$.
We take the orientation of $S_{i}$ such that lifted $\widetilde{S}_{i}$-action preserves the orientation of the given normal orientation of $X_{i}$.
Therefore, we can assign the primitive vector $s_{i}\in \mathfrak{t}_{\mathbb{Z}}$ to $X_{i}$ without sign ambiguity.
Moreover, the continuous map $p_{x}:V_{x}\to V_{x}/H_{x}\simeq U_{x}$ is an equivariant map with respect to the finite covering 
$\widetilde{S}_{i}\to \widetilde{S}_{i}/H_{x}\simeq S_{i}$, i.e., the $r_{i}$-times rotation map between circles for some $r_{i}\in \mathbb{N}$.
Thus we may write $\widetilde{S}_{i}={\rm exp}\mathbb{R}(r_{i}s_{i})$ and $S_{i}={\rm exp}\mathbb{R}s_{i}$ (also see equation \eqref{sequence}). 

A torus orbifold with a preferred normal orientation on each $X_{i}$ is called an {\it ``omnioriented'' torus orbifold}.
In this case, the characteristic function $\lambda:\mathfrak{F}(Q)\to \mathbb{Z}^{n}$ is uniquely determined
without ambiguity of scalar multiplications, i.e., $\lambda(F_{i})=v_{i}:=r_{i}s_{i}$.
We also have the following fact, see~\cite[Theorem 4.4]{DKS}:
%%%%%%%%%%%%%%%%%%%%%%%%%%%%%%%%%%%%%%%%%%%%%
%%% lemma 3.3
%%%%%%%%%%%%%%%%%%%%%%%%%%%%%%%%%%%%%%%%%%%%%
\begin{lemma}
\label{fundamental-lem}
Given a torus orbifold $X$ with $X/T\simeq Q$ such that $H^{2}(Q;\mathfrak{t}_{\mathbb{Z}})=0$, let $\lambda$ be the characteristic function determined by the appropriate choice of omniorientation as above. Then, $X$ is equivariantly homeomorphic to $X(Q,\lambda)$.
\end{lemma}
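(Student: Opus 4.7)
The plan is to construct an equivariant homeomorphism $\bar{\Phi}\colon X(Q,\lambda) \to X$ by producing a continuous global section $s \colon Q \to X$ of the orbit map $\pi \colon X \to X/T \simeq Q$ with the property that the $T$-isotropy of $s(q)$ is exactly $T_{F(q)}$ for every $q \in Q$. Given such a section, the formula $\bar{\Phi}([q,t]) := t \cdot s(q)$ is well-defined on $X(Q,\lambda) = (Q \times T)/\sim$, continuous, and $T$-equivariant. Since $X(Q,\lambda)$ is compact and $X$ is Hausdorff, it then suffices to check that $\bar{\Phi}$ is a bijection, which follows fibrewise by matching $T/T_{F(q)}$ with $\pi^{-1}(q)$.

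The main work is therefore the construction of $s$. Over the interior $Q^{\circ} = Q \setminus \partial Q$, condition $(\ast)$ forces $T_{F(x)}$ to be trivial, so the restriction $\pi^{-1}(Q^{\circ}) \to Q^{\circ}$ is an honest principal $T$-bundle. Principal $T$-bundles over a paracompact space are classified by $H^{2}(-;\mathfrak{t}_{\mathbb{Z}})$, and since a manifold with faces deformation retracts onto its interior, the inclusion $Q^{\circ} \hookrightarrow Q$ is a homotopy equivalence; thus $H^{2}(Q^{\circ};\mathfrak{t}_{\mathbb{Z}}) \cong H^{2}(Q;\mathfrak{t}_{\mathbb{Z}}) = 0$ by hypothesis, and the principal bundle admits a section $s^{\circ}\colon Q^{\circ} \to \pi^{-1}(Q^{\circ})$.

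The crucial step is to extend $s^{\circ}$ continuously across the boundary strata while retaining the correct isotropy. For $q$ in the relative interior of a codimension-$k$ face $F(q)=(F_{i_{1}}\cap\cdots\cap F_{i_{k}})^{o}$, I would invoke the good local chart $(U_{x},V_{x},H_{x},p_{x})$ at some $x\in\pi^{-1}(q)$ and exploit properties (1)--(5) together with the chosen omniorientation: the primitive vectors $s_{i_{j}}$ and multiplicities $r_{i_{j}}$ determining $\lambda(F_{i_{j}})=r_{i_{j}}s_{i_{j}}$ identify a $T$-invariant tubular neighbourhood of $\pi^{-1}(q)$ in $X$ with the corresponding neighbourhood in the local model for the quotient construction $X(Q,\lambda)$. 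Each local model carries a tautological section, and the task is to patch these together with $s^{\circ}$. The ambiguity in comparing two local trivialisations over their common free part is a $\mathfrak{t}_{\mathbb{Z}}$-valued function on each double overlap, assembling into a \v{C}ech $2$-cocycle with coefficients in $\mathfrak{t}_{\mathbb{Z}}$; the hypothesis $H^{2}(Q;\mathfrak{t}_{\mathbb{Z}})=0$ ensures that this cocycle is a coboundary, so the local sections can be simultaneously adjusted to glue into the desired continuous global section $s$.

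The principal obstacle is exactly this gluing argument. Producing a section over $Q^{\circ}$ is essentially formal once one has bundle classification, but ensuring that this section admits a continuous extension to $Q$ landing in orbits with the right isotropy is the delicate point: the omniorientation is what pins down a canonical local trivialisation near each stratum, and $H^{2}(Q;\mathfrak{t}_{\mathbb{Z}})=0$ is exactly what allows the local normalisations to be patched globally. Once $s$ is in hand, verifying that the induced $\bar{\Phi}$ is an equivariant homeomorphism is routine, completing the argument.
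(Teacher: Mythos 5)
The paper does not actually prove this lemma --- it is quoted from \cite[Theorem~4.4]{DKS} --- so there is no in-text argument to compare against; judged on its own, your proposal is the standard Davis--Januszkiewicz-type reconstruction argument, which is indeed the strategy behind the cited result, and its overall architecture (build a global section $s$ of $\pi$ with prescribed isotropy, define $\bar\Phi([q,t])=t\cdot s(q)$, conclude by compactness) is sound. The one place where your sketch is imprecise enough to matter is the gluing step. First, the difference of two local sections over an overlap is a $T$-valued function, not a $\mathfrak{t}_{\mathbb{Z}}$-valued one; the passage to $H^{2}(Q;\mathfrak{t}_{\mathbb{Z}})$ goes through the exponential sheaf sequence $0\to\mathfrak{t}_{\mathbb{Z}}\to\mathfrak{t}_{\mathbb{R}}\to\mathcal{T}\to 0$ and the fineness of the middle sheaf, identifying $H^{1}(Q;\mathcal{T})$ with $H^{2}(Q;\mathfrak{t}_{\mathbb{Z}})$; as stated, a ``$\mathfrak{t}_{\mathbb{Z}}$-valued function on each double overlap assembling into a $2$-cocycle'' is not the right object. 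Second, over overlaps that meet the boundary the comparison function is only well defined modulo the local isotropy subtorus $T_{F(x)}$, so the obstruction naturally lives in the cohomology of a quotient sheaf $\mathcal{T}/\mathcal{T}_{\lambda}$; one must check (using the local models furnished by the good charts and the omniorientation, exactly as you indicate) that this still reduces to the vanishing of $H^{2}(Q;\mathfrak{t}_{\mathbb{Z}})$. Neither point is a wrong turn --- this is precisely the bookkeeping carried out in \cite{DKS} and in the earlier quasitoric literature --- but in a complete write-up it is where the actual content of the hypothesis $H^{2}(Q;\mathfrak{t}_{\mathbb{Z}})=0$ enters, and your sketch currently elides it.
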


%%%%%%%%%%%%%%%%%%%%%%%%%%%%%%%%%%%%%%%%%%%%%
%%% Remark 3.4
%%%%%%%%%%%%%%%%%%%%%%%%%%%%%%%%%%%%%%%%%%%%%
\begin{remark}
In this paper, an equivariant homeomorphism means a weakly equivariant homeomorphism, i.e., an equivariant homeomorphism up to an automorphism of the groups acting on the spaces. We often denote them by $(X,T)\simeq (X',T')$.
\end{remark}

%%%%%%%%%%%%%%%%%%%%%%%%%%%%%%%%%%%%%%%%%%%%%
%%% Remark 3.5
%%%%%%%%%%%%%%%%%%%%%%%%%%%%%%%%%%%%%%%%%%%%%
\begin{remark}
\label{meaning_of_choices}
Note that a choice of scalar multiplications of the $\lambda(F_{i})$'s 
changes the orbifold structure on $X$; however, it 
does not change the equivariant homeomorphism type of $X$.
Namely, if $\lambda(F_{i})=r_{i}\lambda'(F_{i})$ ($F_{i}\in \mathfrak{F}(Q)$) for some $r_{i}\in \mathbb{Z}\setminus \{0\}$, then
$X(Q, \lambda)\simeq X(Q, \lambda')$.
\end{remark}

%%%%%%%%%%%%%%%%%%%%%%%%%%%%%%%%%%%%%%%%%%%%%
%%% Example 3.6
%%%%%%%%%%%%%%%%%%%%%%%%%%%%%%%%%%%%%%%%%%%%%
\begin{example}[Spindle]
\label{spindle}
We denote the cyclic group of order $k$ by $C_{k}$ and consider the natural surjection $p_{k}:\mathbb{C}\to \mathbb{C}/C_{k}$.
We define a {\it spindle} $S^{2}(m,n)$, for $m,n\not=0$, as follows.
The underlying topological space of $S^{2}(m,n)$ is homeomorphic to the $2$-dimensional sphere $S^{2}$.
Denote the northpole of $S^2$ by $N$ and the southpole by $S$.
The orbifold structure of $S^{2}(m,n)$, for $m,n>0$, is 
the maximal orbifold atlas $\mathcal{U}=\{(U_{\alpha},V_{\alpha},H_{\alpha},p_{\alpha})\}$ which contains the following two orbifold charts:
(1) $(U_{S}, \mathbb{C}, C_{m}, p_{m})$ for $m>0$ around $S$ whose open neighborhood $U_{S}$ is defined by $S^{2}\setminus \{N\}$;
(2) $(U_{N}, \mathbb{C}, C_{n}, p_{n})$ for $n>0$ around $N$ whose open neighborhood $U_{N}$ is defined by $S^{2}\setminus \{S\}$.
The orbifold $S^{2}(m,m)$ is also called a {\it rugby ball} and $S^{2}(m, 1)$ or $S^{2}(1,n)$ is also called a {\it teardrop}.
If $m$ is negative, then we consider the orbifold chart on $U_{S}$ as $(U_{S}, \overline{\mathbb{C}}, C_{-m}, p_{-m})$, where 
$\overline{\mathbb{C}}$ is $\mathbb{C}$ with the reversed orientation, $C_{-m}$ acts on it by multiplication and $p_{-m}:\overline{\mathbb{C}}\to \overline{\mathbb{C}}/C_{-m}$ is the natural surjection. 
Similarly, we define the orbifold chart on $U_{N}$ when $n<0$.

Note that the standard $\mathbb{T}^{1}$-action on $\mathbb{C}$ induces $(U_{S}, \mathbb{T}^{1}/C_{m})\simeq (U_{S}, T^{1})$ and $(U_{N}, \mathbb{T}^{1}/C_{n})\simeq (U_{N}, T^{1})$ by
 $p_{m}$ and $p_{n}$ respectively. 
Moreover, because the underlying space $S^{2}$ has an effective $T^{1}$-action, 
$S^{2}(m,n)$ also has an effective $T^{1}$-action.
Then, there are two fixed point $\{N,S\}$.
Therefore, $(S^{2}(m,n),T^{1})$ is a torus orbifold 
and its orbit space $S^{2}(m,n)/T^{1}$ is the interval $[-1,1]$ such that $\{-1\}$ (resp.~$\{1\}$) corresponds to $S$ (resp.~$N$).
Then, $(U_{S}, \mathbb{C}, C_{m}, p_{m})$ (resp.~$(U_{S}, \overline{\mathbb{C}}, C_{-m}, p_{-m})$)
has the natural extension of the $T^1$-action whose normal orientation is determined 
by the standard orientation of $\mathbb{C}$ (resp.~$\overline{\mathbb{C}}$) and 
$p_{m}:\mathbb{C}\to U_{S}\simeq \mathbb{C}/C_{m}$ (resp.~$p_{-m}:\overline{\mathbb{C}}\to U_{S}\simeq \mathbb{C}/C_{-m}$) induces 
the homomorphism $\widetilde{T}^{1}\to T^{1}\simeq \widetilde{T}^{1}/C_{\pm m}$ by rotating $m$-times.
Similarly we have the $T^{1}$-extension on the orbifold chart on $U_{N}$.
Hence, the characteristic function $\lambda:\{-1,1\}\to \mathbb{Z}\setminus \{0\}$ is defined by $\lambda(-1)=m$ and $\lambda(1)=n$.
This means that the pair $([-1,1], \lambda)$ defines the spindle $S^{2}(m,n)$ (see Figure~\ref{spindle_fig}).

Note that for any $m,n\in\mathbb{Z}\setminus\{0\}$, $(S^{2}(m,n),T^{1})$ is equivariantly homeomorphic to $(S^{2},T^{1})\simeq (S^{2}(1,1),T^{1})$.
In particular, $(S^{2}(1, 1),T^{1})$, $(S^{2}(1, -1),T^{1})$, $(S^{2}(-1, 1),T^{1})$, $(S^{2}(-1, -1),T^{1})$ are equivariantly homeomorphic torus manifolds but have different omniorientations. 
Such different omniorientations give four invariant stably complex structures on $S^{2}$, also see Remark~\ref{meaning_of_choices}.
\begin{figure}[h]
\begin{tikzpicture}
\draw (0,0)--(3,0);
\node [above] at (0,0.1) {$m$};
\fill (0,0) circle [radius=2.5pt];
\node [above] at (3,0.1) {$n$};
\fill (3,0) circle [radius=2.5pt];
\end{tikzpicture}
\caption{The characteristic pair of the spindle $S^{2}(m,n)$.}
\label{spindle_fig}
\end{figure}
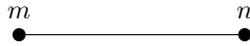
\end{example}

%%%%%%%%%%%%%%%%%%%%%%%%%%%%%%%%%%%%%%%%%%%%%
%%% Section 4
%%%%%%%%%%%%%%%%%%%%%%%%%%%%%%%%%%%%%%%%%%%%%
\section{Torus orbifolds over $\Sigma \Delta^{n-1}$}
\label{sect:4}

From this section, we focus on the case when $Q=\Sigma \Delta^{n-1}$.
If $n\ge 2$, a characteristic function $\lambda:\mathfrak{F}(Q):=\{F_{1},\ldots,F_{n}\}\to \mathbb{Z}^{n}$ is often illustrated by the following $(n\times n)$-square matrix in $GL(n;\mathbb{Q})\cap M_{n}(\mathbb{Z})$:
\begin{align}
\label{matrix}
\Lambda=\left(\lambda(F_{1})\mid \cdots \mid \lambda(F_{n}) \right)=
\begin{pmatrix}
\lambda_{11} & \cdots & \lambda_{1n} \\
\vdots & \ddots & \vdots \\
\lambda_{n1} & \cdots & \lambda_{nn}
\end{pmatrix}
\end{align}
where $\lambda(F_{i})$, $i=1,\ldots, n$, is a nonzero vector in $\mathbb{Z}^{n}$.
For simplicity, we shall write $X(\Lambda):=X(\Sigma\Delta^{n-1},\lambda)$.

%%%%%%%%%%%%%%%%%%%%%%%%%%%%%%%%%%%%%%%%%%%%%
%%% Remark 4.1
%%%%%%%%%%%%%%%%%%%%%%%%%%%%%%%%%%%%%%%%%%%%%
\begin{remark}
If one takes $Q$ as a manifold with two vertices which is not isomorphic to $\Sigma \Delta^{n-1}$ (see Remark~\ref{connected-sum-construction}), then $X(Q,\lambda)$ may not be homeomorphic to $X(\Lambda)$.
In general, if such a $Q$ is not face acyclic, then a torus orbifold over $Q$ may not be recovered from the quotient construction discussed in Section \ref{sect:3}. 
\end{remark}

The goal of this section is to show Theorem~\ref{top-type}
which illustrates the 
equivariant topological type of a torus orbifold $X(\Lambda)$.
Let $\pi:X(\Lambda)\to \Sigma\Delta^{n-1}$ be the orbit projection of the $T^{n}$-action on $X(\Lambda)$ 
and $X_{i}:=\pi^{-1}(F_{i})$ the characteristic suborbifold associated to $F_{i}$.

Note that the integer square matrix $\Lambda$ defined in equation  \eqref{matrix} induces an isomorphism
$\Lambda:\mathbb{R}^{n}\to \mathbb{R}^{n}$; therefore, this induces an isomorphism
$\widetilde{\Lambda}:\mathbb{R}^{n}/\mathbb{Z}^{n}\to \mathbb{R}^{n}/\Lambda(\mathbb{Z}^{n})$.
Hence, we can define the following surjective homomorphism:
\begin{align}
\label{sequence}
\xymatrix{
\mathbb{T}^{n}=\mathbb{R}^{n}/\mathbb{Z}^{n} \ar[r]^-{\widetilde{\Lambda}}_-{\cong} & 
\mathbb{R}^{n}/\Lambda(\mathbb{Z}^{n})
\cong \prod_{i=1}^{n}S_{i}
 \ar@{->>}[r]^-{\iota} & 
T^{n}
}
\end{align}
where $\mathbb{T}^{n}$ is the standard $n$-dimensional torus in $\mathbb{C}^{n}$ and the surjective homomorphism $\iota$ is induced from the product of the injective homomorphisms $\iota_{i}:S_{i}\to T^{n}$. 
Put 
\begin{align*}
\widetilde{T}^{n}:=\prod_{i=1}^{n}S_{i},
\end{align*}
then via the isomorphism $\widetilde{\Lambda}$ we may regard $\widetilde{T}^{n}$ as the standard torus $\mathbb{T}^{n}$.
By this identification, the standard $\widetilde{T}^{n}$-action on the unit $2n$-dimensional sphere $S(\oplus_{i=1}^{n}\mathbb{C}\lambda_{i}\oplus \mathbb{R})$ may be regarded as the standard $\mathbb{T}^{n}$-action on $\mathbb{S}^{2n}:=S(\mathbb{C}^{n}\oplus\mathbb{R})$, where the symbol $S(V(\rho)\oplus \mathbb{R})$ for a complex $n$-dimensional $T^{n}$-representation space $V(\rho)$ represents the unit sphere (with respect to the standard metric) in $\mathbb{C}^{n}\times \mathbb{R}\simeq V(\rho)\oplus \mathbb{R}$ with the torus action induced from the representation $\rho:T^{n}\to T^{n}$.

The sphere $\mathbb{S}^{2n}$ with the standard $\mathbb{T}^{n}$-action is known as a torus manifolds with two fixed points, 
and its characteristic submanifolds $M_{i}\subset \mathbb{S}^{2n}$, $i=1,\ldots, n$, are defined by 
\begin{align}
\label{ch_mfd}
M_{i}=\{(z_{1},\ldots,z_{n},r)\in \mathbb{S}^{2n}\subset \mathbb{C}^{n}\oplus \mathbb{R}\ |\ z_{i}=0\}.
\end{align}
Therefore, its normal orientation can be canonically determined by the orientation of the $i$th complex space.
Denote
\begin{align*}
G(\Lambda):=\ker \iota\circ\widetilde{\Lambda}\subset \mathbb{T}^{n}.
\end{align*}
We also denote the complex one-dimensional representation $\iota_{i}\circ p_{i}\circ \widetilde{\Lambda}$ by
\begin{align}
\label{one-dim-rep}
\mu_{i}:=\iota_{i}\circ p_{i}\circ \widetilde{\Lambda}:\mathbb{T}^{n}\to \widetilde{T}^{n}\to S_{i}\hookrightarrow T^{n},
\end{align} 
where $p_{i}$ is the $i$th projection of $\widetilde{T}^{n}$.
The following group is a product of cyclic groups
because it is a finite subgroup of an abelian group (see Remark~\ref{factor}): 
\begin{align*}
G(\Lambda)=\bigcap_{i=1}^{n}\ker \mu_{i}\cong \mathbb{Z}^{n}/\Lambda(\mathbb{Z}^{n}).
\end{align*}
Then, we can consider the following action induced from the standard action $(\mathbb{S}^{2n}, \mathbb{T}^{n})$:
\begin{align*}
(\mathbb{S}^{2n}/G(\Lambda), \mathbb{T}^{n}/G(\Lambda))
\simeq (\mathbb{S}^{2n}/G(\Lambda), T^{n}).
\end{align*} 
It is easy to check that there are exactly two fixed points of this action and that the characteristic suborbifolds $X_{i}\subset \mathbb{S}^{2n}/G(\Lambda)$, $i=1,\ldots, n$, are given by 
\begin{align*}
X_{i}=M_{i}/G(\Lambda),
\end{align*}
where the $M_{i}$'s are defined in equation \eqref{ch_mfd}.
Since only the $i$th factor of $\mathbb{T}^{n}$ fixes $M_{i}$, 
$X_{i}$ is fixed by $\mu_{i}(\mathbb{T}^{n})=S_{i}={\rm exp}\mathbb{R}\lambda_{i}\subset T^{n}$.
Here, we define the normal orientation of $X_{i}$ as the normal orientation of $M_{i}$. 
This shows that the characteristic function of $(\mathbb{S}^{2n}/G(\Lambda), \mathbb{T}^{n}/G(\Lambda))$ coincides with that of $(X(\Lambda), T^{n})$.
Hence, by using Lemma~\ref{fundamental-lem}, we have the following theorem:
%%%%%%%%%%%%%%%%%%%%%%%%%%%%%%%%%%%%%%%%%%%%%
%%% Theorem 4.2
%%%%%%%%%%%%%%%%%%%%%%%%%%%%%%%%%%%%%%%%%%%%%
\begin{theorem}
\label{top-type}
Assume $n\ge 2$.
Let $X(\Lambda)$ be a torus orbifold over $\Sigma \Delta^{n-1}$ with a characteristic function $\Lambda$.
Then $(X(\Lambda),T^{n})$ is equivariantly homeomorphic to $(\mathbb{S}^{2n}/G(\Lambda), \mathbb{T}^{n}/G(\Lambda))$.
\end{theorem}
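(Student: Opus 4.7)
The statement is essentially a verification that the torus orbifold on the right-hand side, $(\mathbb{S}^{2n}/G(\Lambda),\ \mathbb{T}^{n}/G(\Lambda))$, has the same combinatorial data as $(X(\Lambda),T^{n})$, at which point Lemma~\ref{fundamental-lem} furnishes the equivariant homeomorphism. My plan is to carry out this verification in three short steps.

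First, I would identify the orbit space. Since $\mathbb{S}^{2n}$ with its standard $\mathbb{T}^{n}$-action is a torus manifold with two fixed points, its orbit space is a manifold with two vertices, and one can write down the quotient map explicitly via $(z_{1},\ldots,z_{n},r)\mapsto(|z_{1}|,\ldots,|z_{n}|,r)$ to see that $\mathbb{S}^{2n}/\mathbb{T}^{n}\cong \Sigma\Delta^{n-1}$ as manifolds with faces. The finite group $G(\Lambda)\subset \mathbb{T}^{n}$ acts trivially on the orbit space, so quotienting first by $G(\Lambda)$ and then by $T^{n}\cong \mathbb{T}^{n}/G(\Lambda)$ yields the same identification $(\mathbb{S}^{2n}/G(\Lambda))/T^{n}\cong \Sigma\Delta^{n-1}$, with the facet $F_{i}$ corresponding to the image of $M_{i}$.

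Second, I would observe that the characteristic suborbifolds and their characteristic vectors match. The paragraph immediately preceding the theorem already records that there are exactly two $T^{n}$-fixed points in $\mathbb{S}^{2n}/G(\Lambda)$, that the characteristic suborbifold over $F_{i}$ is $X_{i}=M_{i}/G(\Lambda)$, that the circle subgroup fixing $X_{i}$ is $S_{i}=\exp\mathbb{R}\lambda_{i}$ by definition of $\mu_{i}$ in \eqref{one-dim-rep}, and that the normal orientation of $X_{i}$ is induced from the canonical complex orientation of the $i$th summand of $\mathbb{C}^{n}\oplus\mathbb{R}$. Together with the fact that the covering $\widetilde{S}_{i}\to S_{i}$ in \eqref{sequence} is precisely $r_{i}$-fold for $\lambda(F_{i})=r_{i}s_{i}$, this means the associated characteristic function of $\mathbb{S}^{2n}/G(\Lambda)$, computed as in the discussion preceding Lemma~\ref{fundamental-lem}, equals $\Lambda$ on the nose.

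Third, I would apply Lemma~\ref{fundamental-lem}. The lemma requires $H^{2}(\Sigma\Delta^{n-1};\mathfrak{t}_{\mathbb{Z}})=0$, which is immediate because $\Sigma\Delta^{n-1}$ is contractible. Hence both $X(\Lambda)$ and $\mathbb{S}^{2n}/G(\Lambda)$ are equivariantly homeomorphic to the quotient construction $X(\Sigma\Delta^{n-1},\lambda)$ associated to the common characteristic function $\Lambda$, and the theorem follows.

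The only step where care is needed is matching the omniorientations in the second step: one has to confirm that the normal orientation inherited from $M_{i}\subset \mathbb{C}^{n}\oplus\mathbb{R}$ descends through the finite quotient $M_{i}\to M_{i}/G(\Lambda)=X_{i}$ and corresponds to the sign of $\lambda(F_{i})$ determined by the lifted $\widetilde{S}_{i}$-action, as explained in the discussion of omniorientations in Section~\ref{sect:3}. Everything else reduces to bookkeeping with the sequence \eqref{sequence} and the definitions of $G(\Lambda)$ and $\mu_{i}$.
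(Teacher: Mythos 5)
Your proposal is correct and follows essentially the same route as the paper: the paper's proof consists precisely of the discussion preceding the theorem (matching the two fixed points, the characteristic suborbifolds $X_{i}=M_{i}/G(\Lambda)$, the circles $S_{i}=\exp\mathbb{R}\lambda_{i}$, and the normal orientations inherited from the $M_{i}$) followed by an appeal to Lemma~\ref{fundamental-lem}. Your additional remarks---the explicit identification $\mathbb{S}^{2n}/\mathbb{T}^{n}\cong\Sigma\Delta^{n-1}$ and the observation that $H^{2}(\Sigma\Delta^{n-1};\mathfrak{t}_{\mathbb{Z}})=0$ by contractibility---are details the paper leaves implicit, not a different argument.
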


%%%%%%%%%%%%%%%%%%%%%%%%%%%%%%%%%%%%%%%%%%%%%
%%% Remark 4.3
%%%%%%%%%%%%%%%%%%%%%%%%%%%%%%%%%%%%%%%%%%%%%
\begin{remark}
If $G(\Lambda)$ is the trivial group, then $(\mathbb{S}^{2n}/G(\Lambda), \mathbb{T}^{n}/G(\Lambda)))$ is the torus manifold $(\mathbb{S}^{2n}, \mathbb{T}^{n})$.
This fact is generalized in Proposition~\ref{final-prop}.
\end{remark}

%%%%%%%%%%%%%%%%%%%%%%%%%%%%%%%%%%%%%%%%%%%%%
%%% Remark 4.4
%%%%%%%%%%%%%%%%%%%%%%%%%%%%%%%%%%%%%%%%%%%%%
\begin{remark}
\label{factor}
By using the Smith normal form, 
there exist $P, Q\in GL(n;\mathbb{Z})$ such that 
the sequence \eqref{sequence} can be written as follows:
\begin{align*}
\begin{split}
\xymatrix{
\mathbb{T}^{n}=\mathbb{R}^{n}/\mathbb{Z}^{n}\ar[d]_{P} \ar[r]^-{\widetilde{\Lambda}}_-{\cong} & 
\mathbb{R}^{n}/\Lambda(\mathbb{Z}^{n})
\ar[d]_{Q}
 \ar@{->>}[r]^-{\iota} & 
T^{n}\ar[d]_{\cong} \\
\mathbb{R}^{n}/P(\mathbb{Z}^{n}) \ar[r]^-{\widetilde{\Lambda'}}_-{\cong} & 
\mathbb{R}^{n}/(r_{1}\mathbb{Z}\oplus \cdots \oplus r_{n}\mathbb{Z})
 \ar@{->>}[r]^-{\iota} & 
\mathbb{T}^{n}/(C_{r_{1}}\times \cdots \times C_{r_{n}})
}
\end{split}
\end{align*}
where 
\begin{align*}
\Lambda'=Q\Lambda P^{-1}=
\begin{pmatrix}
    r_{1} & & \\
    & \ddots & \\
    & & r_{n}
  \end{pmatrix}
\end{align*}
for some positive integers $r_{1},\ldots, r_{n}$ such $r_{1} | r_{2} |\cdots | r_{n}$, and $C_{r}\cong \mathbb{Z}/r\mathbb{Z}$ is the cyclic subgroup in $\mathbb{T}^{1}$.
Here, we can compute $r_{i}$, $i=1,\ldots, n$, as 
\begin{align*}
r_{i}=\frac{m_{i}(\Lambda)}{m_{i-1}(\Lambda)},
\end{align*}
where $m_{0}(\Lambda):=1$ and $m_{i}(\Lambda)$ is the {\it $i$\textup{th} determinant divisor}, i.e., 
the greatest common divisor of all $i\times i$ minors of $\Lambda$.
\end{remark}

%%%%%%%%%%%%%%%%%%%%%%%%%%%%%%%%%%%%%%%%%%%%%
%%% Remark 4.5
%%%%%%%%%%%%%%%%%%%%%%%%%%%%%%%%%%%%%%%%%%%%%
\begin{remark}
One of the consequences of Theorem~\ref{top-type} is
that $X(\Lambda)$ with $n\ge 2$ is equivariantly homeomorphic to a {\it global quotient}, i.e., $X(\Lambda)$ is obtained by the quotient of a finite group action on a manifold; therefore, $X(\Lambda)$ is a {\it good orbifold} (see \cite{ALR}).
On the other hand, we can easily check that a $2$-dimensional torus orbifold with two fixed points is a spindle $S^{2}(m,n)$ as described in Example~\ref{spindle}.
It is well-known that the spindle $S^{2}(m,n)$ for $|m|\not=|n|$ is a {\it bad orbifold}, i.e., it can not be obtained by the global quotient of $S^{2}$.
However, we note that every torus orbifold $S^{2}(m,n)$ is equivariantly homeomorphic to the torus manifold $S^{2}$ as a topological space (ignoring the orbifold structure).
\end{remark}

Theorem~\ref{top-type} leads us to the following corollary which also can be obtained by applying \cite[Proposition 2.12]{GKRW} to the case of a torus orbifold over $Q$:
%%%%%%%%%%%%%%%%%%%%%%%%%%%%%%%%%%%%%%%%%%%%%
%%% Corollary 4.6
%%%%%%%%%%%%%%%%%%%%%%%%%%%%%%%%%%%%%%%%%%%%%
\begin{corollary}
Let $U_{x}=X(\mathbb{R}^{n}_{\geq},\lambda|_{\mathbb{R}^{n}_{\geq}})$ be the open invariant neighborhood around a fixed point $x$ of a torus orbifold $X(Q,\lambda)$, i.e., 
$\lambda|_{\mathbb{R}^{n}_{\geq}}$ is a characteristic function on 
$\mathbb{R}^{n}_{\geq}$ obtained by restricting $\lambda$ to facets around $x$.
Let $\Lambda$ be the $(n\times n)$-matrix as in \eqref{matrix} which defines the characteristic function $\lambda|_{\mathbb{R}^{n}_{\geq}}$.
Then, the  following holds:
\begin{align*}
(U_{x},T^{n})\simeq (\mathbb{C}^{n}/G(\Lambda), \mathbb{T}^{n}/G(\Lambda)).
\end{align*}
Furthermore, there is the following special orbifold chart around $x$: 
\begin{align*}
(U_{x},V_{x}, H_{x}, p_{x})=(U_{x}, \mathbb{C}^{n}, G(\Lambda), p_{x}:\mathbb{C}^{n}\to \mathbb{C}^{n}/G(\Lambda)\simeq U_{x}).
\end{align*}
\end{corollary}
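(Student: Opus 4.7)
The plan is to transplant the argument of Theorem~\ref{top-type} verbatim to the local/affine setting, with $\mathbb{S}^{2n}$ replaced by $\mathbb{C}^{n}$ and the suspension $\Sigma\Delta^{n-1}$ replaced by the positive quadrant $\mathbb{R}^{n}_{\geq}$. First I would note that $\mathbb{R}^{n}_{\geq}$ is a manifold with faces with a unique vertex at the origin and exactly $n$ facets, namely the coordinate hyperplane faces, and that $U_{x}=X(\mathbb{R}^{n}_{\geq},\lambda|_{\mathbb{R}^{n}_{\geq}})$ arises, by functoriality of the quotient construction of Section~\ref{sect:3}, as an open $T^{n}$-invariant neighborhood of $x$ in $X(Q,\lambda)$. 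The standard $\mathbb{T}^{n}$-action on $\mathbb{C}^{n}$, with orbit map $(z_{1},\ldots,z_{n})\mapsto(|z_{1}|^{2},\ldots,|z_{n}|^{2})$ onto $\mathbb{R}^{n}_{\geq}$ and characteristic submanifolds $M_{i}=\{z_{i}=0\}$ (with the natural complex normal orientations), then identifies $(\mathbb{C}^{n},\mathbb{T}^{n})$ with $X(\mathbb{R}^{n}_{\geq},\mathrm{Id})$ where $\mathrm{Id}=(e_{1}\mid\cdots\mid e_{n})$.

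Next I would apply the same diagram as in equation \eqref{sequence} to the matrix $\Lambda$: it yields an isomorphism $\widetilde{\Lambda}:\mathbb{T}^{n}\to\widetilde{T}^{n}=\prod_{i}S_{i}$ and the surjection $\iota:\widetilde{T}^{n}\to T^{n}$ whose composite has kernel exactly $G(\Lambda)$, inducing $\mathbb{T}^{n}/G(\Lambda)\cong T^{n}$. Via $\widetilde{\Lambda}$ the standard $\mathbb{T}^{n}$-action on $\mathbb{C}^{n}$ is transported through $\widetilde{T}^{n}$, and passing to the quotient by $G(\Lambda)$ produces a torus orbifold $(\mathbb{C}^{n}/G(\Lambda),T^{n})$ with a unique fixed point at $0$ and characteristic suborbifolds $M_{i}/G(\Lambda)$, each fixed by $S_{i}=\exp\mathbb{R}\lambda(F_{i})$ and carrying the normal orientation that descends from the complex orientation of the $i$-th coordinate. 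Exactly as in Theorem~\ref{top-type}, this shows that the characteristic function associated to $(\mathbb{C}^{n}/G(\Lambda),T^{n})$ coincides with $\lambda|_{\mathbb{R}^{n}_{\geq}}$.

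To conclude the equivariant homeomorphism $(U_{x},T^{n})\simeq(\mathbb{C}^{n}/G(\Lambda),\mathbb{T}^{n}/G(\Lambda))$ I would argue directly from the quotient construction rather than quote Lemma~\ref{fundamental-lem} (whose hypotheses are phrased for closed orbifolds): the map $\mathbb{R}^{n}_{\geq}\times\mathbb{T}^{n}\to\mathbb{R}^{n}_{\geq}\times T^{n}$, $(y,t)\mapsto(y,(\iota\circ\widetilde{\Lambda})(t))$, has fiber a $G(\Lambda)$-orbit and is compatible with the two equivalence relations defining $\mathbb{C}^{n}$ and $U_{x}$ in Section~\ref{sect:3}, hence descends to the desired equivariant homeomorphism. (Alternatively, since $\mathbb{R}^{n}_{\geq}$ is contractible one has $H^{2}(\mathbb{R}^{n}_{\geq};\mathfrak{t}_{\mathbb{Z}})=0$, and a local version of Lemma~\ref{fundamental-lem} gives the same conclusion.)

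The ``furthermore'' statement is then immediate: the finite group $G(\Lambda)\subset\mathbb{T}^{n}\subset O(2n)$ acts linearly on $\mathbb{C}^{n}$, so the quotient map $p_{x}:\mathbb{C}^{n}\to\mathbb{C}^{n}/G(\Lambda)\simeq U_{x}$ gives exactly the special orbifold chart $(U_{x},\mathbb{C}^{n},G(\Lambda),p_{x})$ in the sense of Section~\ref{sect:3}. The main point requiring care is the omniorientation bookkeeping in the second paragraph, namely that the normal orientations match on the nose and not just up to sign; but this is forced by our choice to transport the canonical complex orientations of $\mathbb{C}^{n}$ through $\widetilde{\Lambda}$, so the matching is essentially built into the construction.
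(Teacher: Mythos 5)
Your argument is correct, and its computational core --- the diagram \eqref{sequence}, the identification $G(\Lambda)=\ker(\iota\circ\widetilde{\Lambda})$, and the matching of characteristic suborbifolds $M_i/G(\Lambda)$ with circles $S_i=\exp\mathbb{R}\lambda(F_i)$ and their normal orientations --- is exactly the computation the paper performs in proving Theorem~\ref{top-type}. Where you diverge is in the last step. The paper offers no separate proof of the corollary: it presents it as a direct consequence of Theorem~\ref{top-type} (equivalently, of \cite[Proposition 2.12]{GKRW}), the implicit mechanism being that the open invariant neighborhood of a fixed point in $\mathbb{S}^{2n}/G(\Lambda)$ is $(\mathbb{S}^{2n}\setminus\{\text{antipodal fixed point}\})/G(\Lambda)\simeq \mathbb{C}^{n}/G(\Lambda)$, so the local statement falls out of the global homeomorphism by restriction. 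You instead rebuild the homeomorphism directly by descending the map $(y,t)\mapsto(y,(\iota\circ\widetilde{\Lambda})(t))$ through the two quotient constructions. This buys you two things: you never invoke Lemma~\ref{fundamental-lem} (whose hypotheses are stated for closed orbit spaces, and whose $H^{2}(Q;\mathfrak{t}_{\mathbb{Z}})=0$ condition you correctly note is vacuous for the contractible $\mathbb{R}^{n}_{\geq}$), and your argument applies verbatim to a vertex of an arbitrary $Q$ without first embedding the local model into $\Sigma\Delta^{n-1}$. The cost is that you must verify the descent yourself --- that $\iota\circ\widetilde{\Lambda}$ carries the coordinate subtorus $\mathbb{T}_{F(y)}$ onto $T_{F(y)}$ and that the induced map on quotients is injective (i.e., $(\iota\circ\widetilde{\Lambda})^{-1}(T_{F(y)})=\mathbb{T}_{F(y)}\cdot G(\Lambda)$) --- which you assert but do not spell out; it does hold, and is the only detail worth adding. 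Both routes are valid; yours is the more self-contained.
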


Note that $G(\Lambda)$ acts on $S^{2n-1}=\{(z_{1},\ldots, z_{n},0)\in S(\mathbb{C}^{n}\oplus \mathbb{R})\}$.
Denote its orbit space by
\begin{align*}
L(\Lambda):=S^{2n-1}/G(\Lambda),
\end{align*} 
which is called an {\it orbifold lens space} in \cite{BSS}.
We note that this orbifold $L(\Lambda)$ has a natural $T^{n}$-action.
Moreover, $U_{x}$ is a cone on $L(\lambda)$ which gives the following corollary:
%%%%%%%%%%%%%%%%%%%%%%%%%%%%%%%%%%%%%%%%%%%%%
%%% Corollary 4.7
%%%%%%%%%%%%%%%%%%%%%%%%%%%%%%%%%%%%%%%%%%%%%
\begin{corollary}
\label{suspension-lens-sp}
The torus orbifold $(X(\Lambda), T^{n})$ for $n \ge 2$ is equivariantly homeomorphic to $(\Sigma L(\Lambda),T^{n})$, where the $T^{n}$-action on the suspension $\Sigma L(\Lambda)$ is the natural extension of the $T^{n}$-action on $L(\Lambda)$.
\end{corollary}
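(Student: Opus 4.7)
The plan is to combine Theorem~\ref{top-type} with the standard description of $\mathbb{S}^{2n}=S(\mathbb{C}^{n}\oplus\mathbb{R})$ as the (unreduced) suspension of the equatorial sphere $S^{2n-1}=S(\mathbb{C}^{n}\oplus 0)$, and then to check that the $G(\Lambda)$-action is compatible with this suspension structure.

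First I would invoke Theorem~\ref{top-type} to replace $(X(\Lambda),T^{n})$ by $(\mathbb{S}^{2n}/G(\Lambda),\,\mathbb{T}^{n}/G(\Lambda))$, so that the whole statement reduces to exhibiting a $\mathbb{T}^{n}/G(\Lambda)$-equivariant homeomorphism $\mathbb{S}^{2n}/G(\Lambda) \cong \Sigma L(\Lambda)$. Next I would write down the canonical $\mathbb{T}^{n}$-equivariant homeomorphism
\[
\varphi\colon \Sigma S^{2n-1} \xrightarrow{\ \cong\ } \mathbb{S}^{2n},\qquad [(z,t)] \longmapsto \bigl(\sqrt{1-t^{2}}\,z,\,t\bigr),
\]
which collapses $S^{2n-1}\times\{\pm 1\}$ onto the two poles $(\mathbf{0},\pm 1)$; these are precisely the two fixed points of the standard $\mathbb{T}^{n}$-action on $\mathbb{S}^{2n}$.

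Then comes the key observation: $G(\Lambda)$ sits inside $\mathbb{T}^{n}$ and therefore acts only on the $\mathbb{C}^{n}$-factor of $\mathbb{C}^{n}\oplus\mathbb{R}$, fixing the $\mathbb{R}$-coordinate pointwise. Hence $\varphi$ is $G(\Lambda)$-equivariant with respect to the product action on the suspension side, and quotienting commutes with the suspension along a pointwise fixed axis:
\[
\mathbb{S}^{2n}/G(\Lambda) \;\cong\; \Sigma(S^{2n-1}/G(\Lambda)) \;=\; \Sigma L(\Lambda).
\]
Passing to the induced $T^{n}=\mathbb{T}^{n}/G(\Lambda)$-action finishes the identification: the two cone points of $\Sigma L(\Lambda)$ become the $T^{n}$-fixed points of $X(\Lambda)$, and on the equator the action is precisely the natural $T^{n}$-action on $L(\Lambda)$ whose extension to $\Sigma L(\Lambda)$ is asserted in the statement.

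The main obstacle, if any, is just the bookkeeping to confirm that $\varphi$ descends to the quotient in a $T^{n}$-equivariant way. This is essentially automatic once one notes that $G(\Lambda)\subset \mathbb{T}^{n}$ acts coordinatewise on $\mathbb{C}^{n}$ and trivially on $\mathbb{R}$, so that the suspension coordinate is preserved; the only slightly delicate point is matching the resulting $T^{n}$-action at the two cone points with the trivial isotropy data at the fixed points of $X(\Lambda)$, but this compatibility is built into the statement of Theorem~\ref{top-type}.
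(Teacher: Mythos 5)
Your proposal is correct and follows essentially the same route as the paper: the paper deduces the corollary from Theorem~\ref{top-type} together with the observation that the invariant neighbourhoods $U_{x}$ of the two fixed points are cones on $L(\Lambda)$, which is exactly the suspension decomposition $\mathbb{S}^{2n}/G(\Lambda)\cong\Sigma\bigl(S^{2n-1}/G(\Lambda)\bigr)$ that you make explicit via the map $\varphi$. Your version merely spells out the equivariance bookkeeping (that $G(\Lambda)\subset\mathbb{T}^{n}$ preserves the suspension coordinate) that the paper leaves implicit.
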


%%%%%%%%%%%%%%%%%%%%%%%%%%%%%%%%%%%%%%%%%%%%%
%%% Remark 4.8
%%%%%%%%%%%%%%%%%%%%%%%%%%%%%%%%%%%%%%%%%%%%%
\begin{remark}
When $G(\Lambda)$ is isomorphic to a cyclic group $C_{p}$ and acts freely on $S^{2n-1}$, then $L(\Lambda)$ is a lens space.
Kawasaki \cite{Ka} considers the case when a cyclic group $C_{p}$ acts almost freely on $S^{2n-1}$ and calls the quotient space 
$S^{2n-1}/C_{p}$ the {\it twisted lens space} which is an orbifold in general (in \cite{Al, BFR}, a twisted lens space is also called a {\it weighted lens space}). 
\end{remark}

It is well-known that 
the $T^{1}$-action on $S^{2}(m,n)$ is equivariantly homeomorphic to the standard $\mathbb{T}^{1}$-action on $\mathbb{S}^{2}$.
Therefore, for any $m,n(\not=0)$, their equivariant cohomologies are isomorphic.
Finally we prove the following theorem which generalizes the fact $S^{2}(m,m)=\mathbb{S}^{2}/C_{m}$.
%%%%%%%%%%%%%%%%%%%%%%%%%%%%%%%%%%%%%%%%%%%%%
%%% Theorem 4.9
%%%%%%%%%%%%%%%%%%%%%%%%%%%%%%%%%%%%%%%%%%%%%
\begin{theorem}
\label{final-prop}
If $\Lambda$ is a diagonal matrix, then the torus orbifold $X(\Lambda)$ is equivariantly homeomorphic to the torus manifold obtained from the identity matrix, i.e., 
the standard $2n$-dimensional sphere $\mathbb{S}^{2n}$ with $\mathbb{T}^{n}$-action.
\end{theorem}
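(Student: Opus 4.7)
The plan is to apply Theorem~\ref{top-type} to reduce the problem to producing a weakly equivariant homeomorphism between $(\mathbb{S}^{2n}/G(\Lambda), \mathbb{T}^{n}/G(\Lambda))$ and the standard $(\mathbb{S}^{2n}, \mathbb{T}^{n})$. When $\Lambda=\mathrm{diag}(r_{1},\ldots,r_{n})$, Remark~\ref{meaning_of_choices} lets me assume each $r_{i}>0$, so that $G(\Lambda)\cong C_{r_{1}}\times\cdots\times C_{r_{n}}$ acts coordinatewise on $\mathbb{C}^{n}\subset\mathbb{C}^{n}\oplus\mathbb{R}$, fixing the $\mathbb{R}$-summand.

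The key construction is the map $\phi:\mathbb{S}^{2n}\to\mathbb{S}^{2n}$ defined by
\begin{align*}
\phi(z_{1},\ldots,z_{n},r) = \bigl(\psi_{r_{1}}(z_{1}),\ldots,\psi_{r_{n}}(z_{n}),r\bigr),
\end{align*}
where $\psi_{m}:\mathbb{C}\to\mathbb{C}$ sends $\rho e^{i\theta}$ to $\rho e^{im\theta}$. Each $\psi_{m}$ is well-defined and continuous (it preserves modulus, so the apparent singularity at $0$ is harmless), and its fibers are precisely the $C_{m}$-orbits. Since $|\psi_{m}(z)|=|z|$, $\phi$ really lands on the unit sphere, and it is constant on $G(\Lambda)$-orbits. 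I would then check that the induced map $\bar\phi:\mathbb{S}^{2n}/G(\Lambda)\to\mathbb{S}^{2n}$ is a continuous bijection, hence a homeomorphism by the usual compact-Hausdorff argument.

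For equivariance, a direct computation gives
\begin{align*}
\phi(t\cdot x) = \bigl(t_{1}^{r_{1}},\ldots,t_{n}^{r_{n}}\bigr)\cdot\phi(x)
\end{align*}
for every $t=(t_{1},\ldots,t_{n})\in\mathbb{T}^{n}$ and $x\in\mathbb{S}^{2n}$. Thus $\phi$ intertwines the standard $\mathbb{T}^{n}$-action on the source with that on the target via the homomorphism $\rho_{\Lambda}(t):=(t_{1}^{r_{1}},\ldots,t_{n}^{r_{n}})$. Because $\ker\rho_{\Lambda}=G(\Lambda)$ and $\rho_{\Lambda}$ is surjective, it descends to an isomorphism $\mathbb{T}^{n}/G(\Lambda)\cong\mathbb{T}^{n}$, which is exactly the composite $\iota\circ\widetilde\Lambda$ of the diagram \eqref{sequence} specialized to the diagonal case. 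Together with $\bar\phi$ this produces the desired weakly equivariant homeomorphism.

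There is no substantial obstacle in the argument: the whole construction is the multi-coordinate analogue of the classical homeomorphism $\mathbb{C}/C_{m}\cong\mathbb{C}$, organised so that the sphere condition is preserved. The only points requiring care are the invocation of Remark~\ref{meaning_of_choices} to eliminate signs in the $r_{i}$, and the identification of the correct group isomorphism witnessing weak equivariance, which is forced by the formula $\phi(t\cdot x)=\rho_{\Lambda}(t)\cdot\phi(x)$.
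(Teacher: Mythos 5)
Your proposal is correct and follows essentially the same route as the paper: both reduce via Theorem~\ref{top-type} to the coordinatewise quotient $\mathbb{S}^{2n}/(C_{r_1}\times\cdots\times C_{r_n})$ and then observe that rescaling the columns of $\Lambda$ to the identity does not change the (weakly) equivariant homeomorphism type. The only difference is that where the paper simply cites Remark~\ref{meaning_of_choices} for this last step, you make it explicit by exhibiting the coordinatewise power map $\phi(z_1,\ldots,z_n,r)=(\psi_{r_1}(z_1),\ldots,\psi_{r_n}(z_n),r)$ together with the covering homomorphism $\rho_\Lambda$, which is exactly the content that remark encapsulates.
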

\begin{proof}
If $\Lambda$ is the diagonal matrix $\Lambda'$ in Remark~\ref{factor} then $X(\Lambda)$ is the orbifold $S^{2n}/C_{r_{1}}\times \cdots\times C_{r_{n}}$, where $C_{r_{1}}\times \cdots \times C_{r_{n}}$ acts on the complex coordinates in $S^{2n}\subset \mathbb{C}^{n}\oplus \mathbb{R}$ coordinatewise.
Then, because the scalar products on the characteristic functions does not change the topological type of the underlying topological space, this is equivariantly homeomorphic to the torus manifold obtained by taking $\Lambda=I_{n}$.
\end{proof}

%%%%%%%%%%%%%%%%%%%%%%%%%%%%%%%%%%%%%%%%%%%%%
%%% Section 5
%%%%%%%%%%%%%%%%%%%%%%%%%%%%%%%%%%%%%%%%%%%%%
\section{The equivariant cohomology of $X(\Lambda)$ with $H^{odd}(X(\Lambda))=0$}
\label{sect:5}

In this final section, we compute the equivariant cohomology of $X(\Lambda)$ with $H^{odd}(X(\Lambda))=0$ by using the formula in \cite{DKS}.
In what follows, we often identify $\mathfrak{t}_{\mathbb{Z}}^{*}:={\rm Hom}(\mathfrak{t}_{\mathbb{Z}},\mathbb{Z})$ with $H^{2}(BT;\mathbb{Z})\cong \mathbb{Z}^{n}$.

We first consider the ordinary cohomology of $X(\Lambda)$. 
As $X(\Lambda)$ is simply connected (because of Corollary~\ref{suspension-lens-sp}), $H^{1}(X(\Lambda))=0$. Moreover, for $H^{3}(X(\Lambda))$, the following lemma holds:
%%%%%%%%%%%%%%%%%%%%%%%%%%%%%%%%%%%%%%%%%%%%%
%%% Lemma 5.1
%%%%%%%%%%%%%%%%%%%%%%%%%%%%%%%%%%%%%%%%%%%%%
\begin{lemma}
\label{3rd-degree}
Assume $n\ge 2$. 
Let $N$ be the smallest subgroup of $G(\Lambda)$ which contains 
all those elements of $G(\Lambda)$ that fix points in $S^{2n-1}$. 
Then
$H^{3}(X(\Lambda))\cong G(\Lambda)/N$.
\end{lemma}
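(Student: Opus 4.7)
The plan is to reduce the claim, via Corollary~\ref{suspension-lens-sp}, to a computation on the orbifold lens space $L(\Lambda)=S^{2n-1}/G(\Lambda)$. Since $X(\Lambda)\simeq \Sigma L(\Lambda)$ as topological spaces, the suspension isomorphism in reduced cohomology yields
\[
H^{3}(X(\Lambda))\cong \widetilde{H}^{2}(L(\Lambda))=H^{2}(L(\Lambda);\mathbb{Z}),
\]
so the problem becomes computing the second integral cohomology of the quotient of a sphere by a finite abelian group acting linearly.

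First I would identify $\pi_{1}(L(\Lambda))$. Since $G(\Lambda)$ is finite (hence properly discontinuous) and acts on the simply connected space $S^{2n-1}$, Armstrong's theorem on fundamental groups of orbit spaces applies and gives $\pi_{1}(L(\Lambda))\cong G(\Lambda)/H$, where $H$ is the normal closure of the subgroup generated by those elements of $G(\Lambda)$ with nonempty fixed-point set. As $G(\Lambda)$ is abelian every subgroup is normal, so this normal closure equals $N$ itself, and hence $\pi_{1}(L(\Lambda))\cong G(\Lambda)/N$. Abelianising, $H_{1}(L(\Lambda);\mathbb{Z})\cong G(\Lambda)/N$.

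Next I would verify that $H_{2}(L(\Lambda);\mathbb{Z})$ is finite. The transfer map for a finite group action gives
\[
H^{\ast}(L(\Lambda);\mathbb{Q}) \cong H^{\ast}(S^{2n-1};\mathbb{Q})^{G(\Lambda)},
\]
and since $n\ge 2$ we have $H^{k}(S^{2n-1};\mathbb{Q})=0$ for $0<k<2n-1$; in particular $H^{2}(L(\Lambda);\mathbb{Q})=0$. Hence both $H^{2}(L(\Lambda);\mathbb{Z})$ and $H_{2}(L(\Lambda);\mathbb{Z})$ are torsion (in fact finite). The universal coefficient theorem then yields
\[
H^{2}(L(\Lambda);\mathbb{Z})\cong \mathrm{Hom}(H_{2},\mathbb{Z})\oplus \mathrm{Ext}(H_{1},\mathbb{Z}) = 0 \oplus \mathrm{Ext}(G(\Lambda)/N,\mathbb{Z}) \cong G(\Lambda)/N,
\]
since $\mathrm{Ext}(A,\mathbb{Z})\cong A$ for any finite abelian group $A$. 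Passing back through the suspension isomorphism gives $H^{3}(X(\Lambda))\cong G(\Lambda)/N$.

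The main obstacle is the fundamental-group computation: one must check that Armstrong's hypotheses are met for the action $G(\Lambda)\curvearrowright S^{2n-1}$ and that Armstrong's ``subgroup generated by isotropies'' really coincides with the subgroup $N$ in the statement (which is automatic thanks to the abelianness of $G(\Lambda)$). Everything else is a routine transfer-plus-universal-coefficients argument once one knows that $L(\Lambda)$ has abelian fundamental group $G(\Lambda)/N$ and vanishing rational second cohomology.
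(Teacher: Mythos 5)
Your proof is correct and follows essentially the same route as the paper: Armstrong's theorem gives $\pi_{1}(L(\Lambda))\cong G(\Lambda)/N$, the universal coefficient theorem converts $H_{1}$ into $H^{2}$, and the suspension identification $X(\Lambda)\simeq\Sigma L(\Lambda)$ transports this to $H^{3}(X(\Lambda))$. Your transfer argument showing $H^{2}(L(\Lambda);\mathbb{Q})=0$, so that the $\mathrm{Hom}(H_{2},\mathbb{Z})$ term in the universal coefficient sequence vanishes, is a detail the paper leaves implicit, and is a welcome addition rather than a departure.
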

\begin{proof}
Since $n\ge 2$, $S^{2n-1}$ is simply connected.
Moreover, $G(\Lambda)$ acts on $S^{2n-1}$ effectively.
Therefore, by \cite{A},
we have that $\pi_{1}(S^{2n-1}/G(\Lambda))\cong G(\Lambda)/N\cong H_{1}(L(\Lambda))$.
This shows that $H_{1}(L(\Lambda))$ is torsion if $G(\Lambda)/N$ is not the identity group; namely, $G(\Lambda)\not=N$.
By using the universal coefficient theorem, we have 
$H_{1}(L(\Lambda))\cong H^{2}(L(\Lambda))$.
Therefore, it follows from the Mayer--Vietoris exact sequence that 
$H^{2}(L(\Lambda))\cong H^{3}(\Sigma L(\Lambda))=H^{3}(X(\Lambda))$.
\end{proof}
In particular, if $(S^{2n-1})^{G(\Lambda)}\not=\emptyset$, then $H^{3}(X(\Lambda))=0$.
Moreover, $H^{odd}(X(\Lambda))=0$ if  
$G(\Lambda)=\{e\}$.
Namely, if $\det \Lambda=\pm 1$, then $H^{odd}(X(\Lambda))=0$.

%%%%%%%%%%%%%%%%%%%%%%%%%%%%%%%%%%%%%%%%%%%%%
%%% Remark 5.2
%%%%%%%%%%%%%%%%%%%%%%%%%%%%%%%%%%%%%%%%%%%%%
\begin{remark}
\label{rem-prod}
Suppose that the product of cyclic groups
$G=C_{r_{1}}\times\cdots\times C_{r_{n}}$ acts on $S^{2n-1}\subset \mathbb{C}^{n}$ coordinatewise (also see Remark~\ref{factor}).
Let $N$ be the smallest subgroup in $G$ which contains 
all those elements of $G$ which have fixed points.
Then, $N=G$.
Therefore, by Lemma~\ref{3rd-degree}, $H^{3}(S^{2n}/G)=0$.
\end{remark}

%%%%%%%%%%%%%%%%%%%%%%%%%%%%%%%%%%%%%%%%%%%%%
%%% Section 5.1
%%%%%%%%%%%%%%%%%%%%%%%%%%%%%%%%%%%%%%%%%%%%%
\subsection{Orbifold torus graph of $(Q,\Lambda)$}
\label{sect:5.1}

Let $(Q, \Lambda)$ be a pair of a manifold with two vertices and a characteristic matrix.
Note that if $\dim Q=1$ then $(Q, \Lambda)$ is always $([-1,1],\lambda)$ induced from the spindle $S^{2}(m,n)$ discussed in Example~\ref{spindle} and if $\dim Q\ge 2$ then we may define the characteristic function as the matrix given by \eqref{matrix} because the combinatorial structure is the same as $\Sigma \Delta^{n-1}$. 

We shall define an {\it orbifold torus graph} $(\Gamma,\alpha)$ of $(Q,\Lambda)$ as follows. 
Let $\Gamma=(\mathcal{V}(\Gamma), \mathcal{E}(\Gamma))$ be the one-skeleton of $Q$, i.e., 
$\mathcal{V}(\Gamma)$ is the set of the two vertices and $\mathcal{E}(\Gamma)$ is  the set of the $n$ edges of $Q$.
By the combinatorial structure of $Q$, 
we may regard the edge $e_{j}\in \mathcal{E}(\Gamma)$, $j=1,\ldots, n$, as a connected component of the intersection of exactly $(n-1)$ facets, i.e., $\mathfrak{F}(Q)\setminus \{F_{j}\}$; and we also denote the normal facet of $e_{j}$ by $F_{j}$, i.e., $F_{j}\cap e_{j}=\mathcal{V}(Q)$.
Now, we define a function 
\begin{align*}
\alpha:\mathcal{E}(\Gamma) \to \mathfrak{t}^{*}_\mathbb{Q}
\end{align*}
by the following system of equations: 
\begin{equation}
\label{eq_how_to_determine_axial_ftn}
\left< \alpha(e_{j}), \lambda(F_{i}) \right>=
\begin{cases} 
0\ \text{if $i\not=j$};\\
1\ \text{if $i=j$},  
\end{cases}
\end{equation}
where $\langle~,~ \rangle$ denotes the natural paring between
a vector space $\mathfrak{t}_{\mathbb{Q}}$ and its dual space $\mathfrak{t}_{\mathbb{Q}}^{*}$.
Note that $\lambda(F_{i})\in \mathfrak{t}_{\mathbb{Z}}\subset \mathfrak{t}_{\mathbb{Q}}$. 
We call such a labeled graph $(\Gamma,\alpha)$ an {\it orbifold torus graph} of $(Q,\Lambda)$.

%%%%%%%%%%%%%%%%%%%%%%%%%%%%%%%%%%%%%%%%%%%%%
%%% Remark 5.3
%%%%%%%%%%%%%%%%%%%%%%%%%%%%%%%%%%%%%%%%%%%%%
\begin{remark}
In \cite{DKS}, we define an {\it abstract orbifold torus graph} which is a generalization of torus graphs in \cite{MMP}.
\end{remark}

Denote by $\overline{e}$ the reversed oriented edge of $e$, i.e.,
$i(e)=t(\overline{e})$ and $t(e)=i(\overline{e})$.

%%%%%%%%%%%%%%%%%%%%%%%%%%%%%%%%%%%%%%%%%%%%%
%%% Example 5.4
%%%%%%%%%%%%%%%%%%%%%%%%%%%%%%%%%%%%%%%%%%%%%
\begin{example}
\label{spindle_GKM}
The spindle $S^{2}(m,n)$ is defined by $([-1,1], \lambda)$ in Example~\ref{spindle}.
Set the generator of $\mathfrak{t}_{\mathbb{Z}}^{*}$ by $x$, i.e., $\mathfrak{t}_{\mathbb{Z}}^{*}=\mathbb{Z}x$ and $\mathfrak{t}_{\mathbb{Q}}^{*}=\mathbb{Q}x$. 
In this case, $[-1,1]$ itself, say $e$, is the edge in this manifold with faces and the two vertices $i(e)=\{-1\}$ and $t(e)=\{1\}$ are the only facets.
It follows from the definition of an orbifold torus graph of $([-1,1], \lambda)$ that we have the following axial function (see Figure~\ref{spindle_GKM_fig}):
\begin{eqnarray*}
\alpha(e)=\frac{1}{m}x,\quad \alpha(\overline{e})=\frac{1}{n}x.
\end{eqnarray*}
\begin{figure}[h]
%\begin{center}
%\includegraphics[width=100pt,clip]{spindle_graph.eps}
\begin{tikzpicture}
\draw (0,0)--(3,0);
\node [below] at (0,-0.1) {$-1$};
\fill (0,0) circle [radius=2.5pt];
\node [below] at (3,-0.1) {$1$};
\fill (3,0) circle [radius=2.5pt];
\draw[very thick, ->] (0,0)--(0.8,0);
\node[above] at (0.8,0.1) {$\frac{1}{m}x$};
\draw[very thick, ->] (3,0)--(2.2,0);
\node[above] at (2.2,0.1) {$\frac{1}{n}x$};
\end{tikzpicture}
\caption{The orbifold torus graph of $([-1,1], \lambda)$.}
\label{spindle_GKM_fig}
%\end{center}
\end{figure}
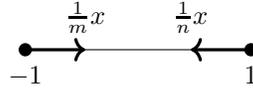
\end{example}

%%%%%%%%%%%%%%%%%%%%%%%%%%%%%%%%%%%%%%%%%%%%%
%%% Example 5.5
%%%%%%%%%%%%%%%%%%%%%%%%%%%%%%%%%%%%%%%%%%%%%
\begin{example}
\label{ex-orbsphere}
Assume $\dim Q \ge 2$.
Let $\Lambda^{*}$ be the transpose of the cofactor matrix of $\Lambda$, i.e., 
$\Lambda^{*}\Lambda=(\det \Lambda) I_{n}$.
We may put
\begin{align}
\label{cofactor_matrix}
\Lambda^{*}=
\begin{pmatrix}
\mu_{1} \\
\vdots \\
\mu_{n}
\end{pmatrix},
\end{align}
where $\mu_{i}\in \mathbb{Z}^{n}\cong \mathfrak{t}_{\mathbb{Z}}^{*}$ is the one-dimensional representation of $\mathbb{T}^{n}$ defined in equation \eqref{one-dim-rep}.
Then, by equation \eqref{eq_how_to_determine_axial_ftn}, the vector $\mu_{i}\in \mathfrak{t}_{\mathbb{Z}}^{*}$ satisfies the following equation:
\begin{align}
\label{axial-fct-2-vertices}
\alpha(e_{i})=\frac{1}{\det \Lambda} \mu_{i},
\end{align}
where $e_{i}$ is the edge of $\Sigma\Delta^{n-1}$ which is not contained in the facet $F_{i}$ (see Figure~\ref{figure}).
We put $D:=\det \Lambda$.
\begin{figure}[h]
%\begin{center}
%\includegraphics[width=200pt,clip]{example.eps}
\begin{tikzpicture}
\begin{scope}[xscale=1.3]
\draw[fill=yellow] (0,0) circle [radius=1];
\node at (-1,0) {$\bullet$};
\node at (1,0) {$\bullet$};
\node[above] at (0,1) {$(1,5)$};
\node[below] at (0,-1) {$(1,3)$};
\end{scope}

\begin{scope}[xshift=170, xscale=1.3]
\draw (0,0) circle [radius=1];
\node at (-1,0) {$\bullet$};
\node at (1,0) {$\bullet$};

\draw [->, thick] (-1,0) arc (180:150:1);
\draw [->, thick] (-1,0) arc (180:210:1);
\draw [->, thick] (1,0) arc (0:30:1);
\draw [->, thick] (1,0) arc (0:-30:1);

\node[left] at (-1.5/2, 1.7/2) {$\frac{5}{2}x-\frac{1}{2}y$};
\node[right] at (1.5/2, 1.7/2) {$\frac{5}{2}x-\frac{1}{2}y$};

\node[left] at (-1.5/2, -1.7/2) {$-\frac{3}{2}x+\frac{1}{2}y$};
\node[right] at (1.5/2, -1.7/2) {$-\frac{3}{2}x+\frac{1}{2}y$};

\end{scope}
\end{tikzpicture}

\caption{The case when $n=2$. The left figure is $(\Sigma \Delta^{1}, \lambda)$ and the right one is its orbifold torus graph, where $\mu_{1}=5x-y$ and $\mu_{2}=-3x+y$ for some generators $x,y$ in $\mathfrak{t}^{*}_{\mathbb{Z}}$.}
\label{figure}
%\end{center}
\end{figure}
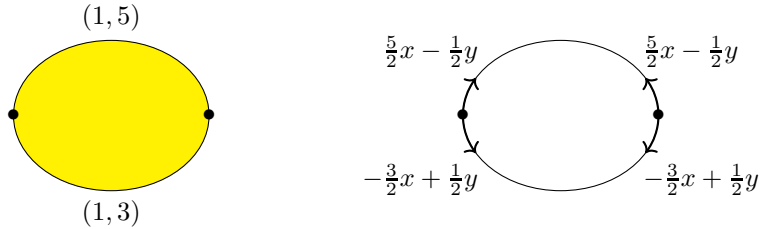
\end{example}

For an orbifold torus graph $(\Gamma,\alpha)$, we define the following rings.
%%%%%%%%%%%%%%%%%%%%%%%%%%%%%%%%%%%%%%%%%%%%%
%%% Definition 5.6
%%%%%%%%%%%%%%%%%%%%%%%%%%%%%%%%%%%%%%%%%%%%%
\begin{definition}[Graph equivariant cohomology]
\label{eq-GKM}
The following ring is said to be the (integral) {\it graph equivariant cohomology ring}:
\begin{equation*}
\label{eq_GKM-graph_cohom}
H_{T}^{*}(\Gamma, \alpha)=
\left\{f:\mathcal{V}(\Gamma) \to 
H^{*}(BT;\mathbb{Z}) \mid 
f(i(e))\equiv f(t(e)) \text{ mod } r_e \alpha(e) \right\},
\end{equation*}
where $r_{e}$ is the minimal positive integer such that $r_{e}\alpha(e)\in \mathfrak{t}^{*}_\mathbb{Z}\cong H^{2}(BT;\mathbb{Z})$ and 
$i(e)$ (resp.~$t(e)$) is the initial (resp.~terminal) vertex of an oriented edge $e$.
\end{definition}
Here, $H_{T}^{*}(\Gamma, \alpha)$ may be regarded as 
an $H^{*}(BT;\mathbb{Z})$-subalgebra of $\bigoplus_{v\in \mathcal{V}(\Gamma)} H^{*}(BT;\mathbb{Z})$. 
Then $r_{e}\alpha(e)\in H^{2}(BT;\mathbb{Z})$ and 
there is a natural grading in $H_{T}^{*}(\Gamma, \alpha)$ induced by the grading of $H^{*}(BT;\mathbb{Z})$.
Note that all edges satisfy $r_{e}\alpha(e)=\pm r_{\overline{e}}\alpha(\overline{e})$ (see \cite{DKS}).
We may define 
the cohomology of $(\Gamma, \alpha)$ over rational coefficients
as follows:
\begin{equation*}
\label{eq_rational_graph_equiv_cohom}
H_{T}^{*}(\Gamma, \alpha;\mathbb{Q})=
\left\{f:\mathcal{V}(\Gamma) \to 
H^{*}(BT;\mathbb{Q})
 \mid f(i(e))\equiv f(t(e)) \text{ mod } \alpha(e) \right\}.
\end{equation*}
Similarly, this has the natural 
$H^{*}(BT;\mathbb{Q})$-algerbra structure. 
This coincides with the definition of the cohomology ring of a GKM graph in \cite[Section 1.7]{GZ}.
One can see that 
$H_{T}^{*}(\Gamma,\alpha)$ is a subring of 
$H_T^{*}(\Gamma, \alpha;\mathbb{Q})$.
We call $H_{T}^{*}(\Gamma, \alpha;\mathbb{Q})$ the {\it rational graph equivariant cohomology}.

The next theorem is a consequence of applying the main result of \cite{DKS} restricted to the case of torus orbifolds with two fixed points.
%%%%%%%%%%%%%%%%%%%%%%%%%%%%%%%%%%%%%%%%%%%%%
%%% Theorem 5.7
%%%%%%%%%%%%%%%%%%%%%%%%%%%%%%%%%%%%%%%%%%%%%
\begin{theorem}
\label{main1-DKS}
Assume $X:=X(Q,\lambda)$ satisfies $H^{odd}(X)=0$.
Then, there is an isomorphism
\begin{align*}
H^{*}_{T}(X)\cong H_{T}^{*}(\Gamma,\alpha),
\end{align*}
where $(\Gamma,\alpha)$ is the orbifold torus graph of $(Q,\lambda)$.
\end{theorem}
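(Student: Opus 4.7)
The plan is to apply the main theorem of \cite{DKS} directly. That theorem provides, for any torus orbifold $X(Q,\lambda)$ satisfying $H^{\mathrm{odd}}(X)=0$ (together with suitable mild combinatorial hypotheses on $Q$), a natural isomorphism $H^*_T(X(Q,\lambda)) \cong H^*_T(\Gamma,\alpha)$, where $(\Gamma,\alpha)$ is its associated orbifold torus graph. Since the odd-vanishing hypothesis is given by assumption here, the work reduces to verifying the combinatorial hypotheses on $Q$ and to identifying the graph defined in Section~\ref{sect:5.1} with the one used in \cite{DKS}.

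For the combinatorial condition, I would invoke Proposition~\ref{comb-str-mfdw2}: $Q$ is combinatorially equivalent to $\Sigma\Delta^{n-1}$, whose proper faces consist of the two vertices $\{p,q\}$ together with suspensions $\Sigma F$ of faces $F$ of $\Delta^{n-1}$. Each such $F$ is contractible and the suspension of a contractible space is contractible; hence every face of $Q$ is acyclic, and $Q$ itself is contractible. This is precisely the input that allows the main theorem of \cite{DKS} to reduce $H^*_T(X)$ to data carried by the one-skeleton.

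For the graph identification, note that Section~\ref{sect:5.1} builds the orbifold torus graph as the one-skeleton $\Gamma$ of $Q$ (two vertices joined by $n$ edges) together with the axial function $\alpha\colon \mathcal{E}(\Gamma) \to \mathfrak{t}^*_{\mathbb{Q}}$ determined by the duality relation \eqref{eq_how_to_determine_axial_ftn}; this matches the recipe used in \cite{DKS}, so the right-hand side of the claimed isomorphism is the object produced by the general theorem. The main obstacle I anticipate is not mathematical but notational: one must confirm that the modulus $r_e\alpha(e)\in H^2(BT;\mathbb{Z})$ appearing in Definition~\ref{eq-GKM} agrees with the form of the compatibility relation stated in \cite{DKS}. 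In the two-vertex setting, the explicit expression \eqref{axial-fct-2-vertices} lets one read the integers $r_e$ off directly from $\Lambda$, so this comparison is routine bookkeeping.
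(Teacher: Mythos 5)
Your proposal matches the paper exactly: the paper offers no independent argument, stating only that the theorem ``is a consequence of applying the main result of \cite{DKS} restricted to the case of torus orbifolds with two fixed points.'' Your additional verification that the faces of $Q\simeq\Sigma\Delta^{n-1}$ are acyclic and that the graph of Section~\ref{sect:5.1} agrees with the one in \cite{DKS} is sensible bookkeeping that the paper leaves implicit, but the route is the same.
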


%%%%%%%%%%%%%%%%%%%%%%%%%%%%%%%%%%%%%%%%%%%%%
%%% Remark 5.8
%%%%%%%%%%%%%%%%%%%%%%%%%%%%%%%%%%%%%%%%%%%%%
\begin{remark}
The above theorem also holds for more general GKM orbifolds satisfying certain conditions (see \cite{DKS}).
\end{remark}

%%%%%%%%%%%%%%%%%%%%%%%%%%%%%%%%%%%%%%%%%%%%%
%%% Section 5.2
%%%%%%%%%%%%%%%%%%%%%%%%%%%%%%%%%%%%%%%%%%%%%
\subsection{Weighted face ring}
\label{sect:5.2}
Given an $n$-valent orbifold torus graph $(\Gamma, \alpha)$ of $(Q,\lambda)$. 
Let $F$ be an $(n-k)$-dimensional face in $Q$ (see Section \ref{sect:2}).
Then
there is an $(n-k)$-valent subgraph $\Gamma_{F}$ which is the one-skeleton of $F$.
We call this subgraph an {\it $(n-k)$-dimensional face} of $(\Gamma,\alpha)$. 
Each face $\Gamma_{F}$ defines a {\it rational Thom class} 
$\tau_F \in H^{2k}_{T}(\Gamma, \alpha;\mathbb{Q})$ as follows: 
\begin{equation*}
\label{eq_rational_thom_class}
\tau_F(v):=
\begin{cases}
\prod_{\substack{i(e)=v \\ e\notin \mathcal{E}(\Gamma_{F})}  }
\alpha(e) & \text{if } v\in \mathcal{V}(\Gamma_{F}); \\
0 & \text{otherwise}. 
\end{cases}
\end{equation*}
Note that $\deg \tau_{F}=2k$ for a codimension-k face $F$.
We formally define
$\tau_{\emptyset}=0, \ \tau_{\Gamma}=1.$

%%%%%%%%%%%%%%%%%%%%%%%%%%%%%%%%%%%%%%%%%%%%%
%%% Example 5.9
%%%%%%%%%%%%%%%%%%%%%%%%%%%%%%%%%%%%%%%%%%%%%
\begin{example}
The following two figures (Figure~\ref{Thom_class1} and Figure~\ref{Thom_class2}) are the examples of rational Thom classes of the orbifold torus graph in Figure~\ref{figure}.
\begin{figure}[h]
%\begin{center}
%\includegraphics[width=180pt,clip]{Thom.eps}
\begin{tikzpicture}[xscale=1.3]
\draw (-1,0) arc (180:0:1);
\draw[dashed] (-1,0) arc (180:360:1);
%\draw (0,0) circle [radius=1];
\node at (-1,0) {$\bullet$};
\node at (1,0) {$\bullet$};
\node[above] at (0,1) {$F$};
\node[above left] at (-1,0) {$p$};
\node[above right] at (1,0) {$q$};
\node[below left] at (-1,0) {$-\frac{3}{2}x+\frac{1}{2}y$};
\node[below right] at (1,0) {$-\frac{3}{2}x+\frac{1}{2}y$};
\end{tikzpicture}
\caption{The rational Thom class of the facet $F$ in Figure~\ref{figure}, i.e., $\tau_{F}(p)=\tau_{F}(q)=-\frac{3}{2}x+\frac{1}{2}y$.}
\label{Thom_class1}
%\end{center}
\end{figure}
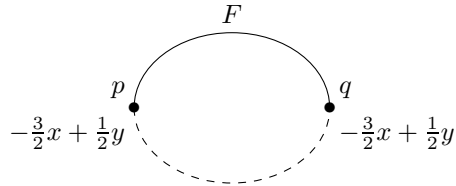

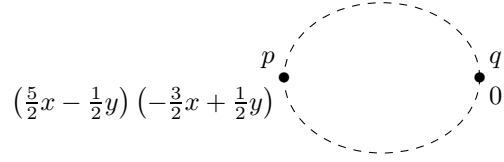
\begin{figure}[h]
%\begin{center}
%\includegraphics[width=180pt,clip]{Thom2.eps}
\begin{tikzpicture}[xscale=1.3]
\draw[dashed] (0,0) circle [radius=1];
\node at (-1,0) {$\bullet$};
\node at (1,0) {$\bullet$};
\node[above left] at (-1,0) {$p$};
\node[above right] at (1,0) {$q$};
\node[below left] at (-1,0) {$\left(\frac{5}{2}x-\frac{1}{2}y\right)\left(-\frac{3}{2}x+\frac{1}{2}y\right)$};
\node[below right, white] at (1,0) {$\left(\frac{5}{2}x-\frac{1}{2}y\right)\left(-\frac{3}{2}x+\frac{1}{2}y\right)$};
\node[below right] at (1,0) {$0$};
\end{tikzpicture}
\caption{The rational Thom class of the vertex $p$ in Figure~\ref{figure}, i.e., $\tau_{p}(p)=(\frac{5}{2}x-\frac{1}{2}y)(-\frac{3}{2}x+\frac{1}{2}y)$ and $\tau_{p}(q)=0$.}
\label{Thom_class2}
%\end{center}
\end{figure}
\end{example}

Let $\mc{F}$ be the set of all faces in $(\Gamma,\alpha)$
and 
$\Z[\tau_F \mid F\in \mc{F}]$ be the graded polynomial ring generated by the rational Thom classes, where the grading is given by the degree of $\tau_{F}$. 
Set the graded subring $\mathscr{Z}_{\Gamma,\alpha}$ of $\mathbb{Z}[\tau_{F}\ |\ F\in \mathcal{F}]$ as 
\begin{align*}
\mathscr{Z}_{\Gamma,\alpha} :=
\left\{ f\in \Z[\tau_F \mid F\in \mc{F}] ~\big|~ \forall v\in \mathcal{V}(\Gamma),~
f(v) \in  H^\ast(BT^n;\Z) \right\}.
\end{align*}
Then, it is easy to see that the elements in $\mathscr{Z}_{\Gamma,\alpha}$ 
of the form 
\begin{align}
\label{relation}
\tau_{E}\tau_{F}-\tau_{E\vee F}\sum_{G\in E\cap F}\tau_{G}
\end{align}
are $0$ for all vertices $v\in \mathcal{V}(\Gamma)$, 
where 
the summation runs through all connected components in $E\cap F$. Here the symbol $E\vee F$ represents the minimal face which contains both $E$ and $F$; note that if $E\cap F\not=\emptyset$ then the face $E\vee F$ can be uniquely determined (also see \cite{MMP}).
Therefore, we can define the ideal $\mathcal{I}$ of $\mathscr{Z}_{\Gamma,\alpha}$ generated by all elements defined by \eqref{relation}.
Set 
\begin{align*}
\mathbb{Z}[\Gamma,\alpha]:=
\mathscr{Z}_{\Gamma,\alpha}/\mathcal{I}.
\end{align*}
We call this ring the {\it weighted face ring} of $(\Gamma,\alpha)$.

The following theorem is one of the main results in \cite[Theorem 3.6]{DKS}:
%%%%%%%%%%%%%%%%%%%%%%%%%%%%%%%%%%%%%%%%%%%%%
%%% Theorem 5.10
%%%%%%%%%%%%%%%%%%%%%%%%%%%%%%%%%%%%%%%%%%%%%
\begin{theorem}
Let $(\Gamma,\alpha)$ be the orbifold torus graph induced from $(Q,\lambda)$.
Then the following graded rings are isomorphic:
\begin{align*}
H^{*}_{T}(\Gamma,\alpha)\cong \mathbb{Z}[\Gamma,\alpha].
\end{align*} 
\end{theorem}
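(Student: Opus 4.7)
The plan is to construct the isomorphism as the natural evaluation homomorphism $\Phi : \mathbb{Z}[\Gamma,\alpha] \to H^*_T(\Gamma,\alpha)$ sending the class of a polynomial in Thom classes to its piecewise-polynomial evaluation at the vertices. Well-definedness requires two checks: first, that the rational Thom classes $\tau_F$ themselves lie in $H^*_T(\Gamma,\alpha;\mathbb{Q})$, which follows from the defining property of $\alpha$ since at adjacent vertices $v,w$ joined by an edge $e$ either both $\tau_F$-values vanish or $\alpha(e)$ appears as a factor of $\tau_F(v)-\tau_F(w)$; and second, that the relations $\tau_E\tau_F - \tau_{E\vee F}\sum_{G\in E\cap F}\tau_G$ evaluate to zero at every vertex. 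The second check is a combinatorial identity: at a vertex $v \in \mathcal{V}(\Gamma_E)\cap\mathcal{V}(\Gamma_F)$ the edges emanating from $v$ and lying outside $\Gamma_{E\vee F}$ split cleanly according to whether they lie in $\Gamma_E$ or $\Gamma_F$, while outside $\mathcal{V}(\Gamma_E)\cap\mathcal{V}(\Gamma_F)$ both sides vanish trivially.

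For surjectivity in the two-vertex setting, I would argue as follows. Given $f \in H^*_T(\Gamma,\alpha)$ with values $f(p)=a$, $f(q)=b$, first subtract $b\cdot\tau_\Gamma$ so that the $q$-value becomes zero; the remaining $p$-value is an element of $H^*(BT;\mathbb{Z})$ divisible by each $r_{e_j}\alpha(e_j)$. Because the $\alpha(e_j)$ are linearly independent (the matrix $\Lambda$ being nonsingular), after scaling they are pairwise coprime in the UFD $H^*(BT;\mathbb{Z})=\mathbb{Z}[x_1,\ldots,x_n]$, so the relevant intersection of ideals is (up to controlled integral correction) generated by the product $\prod_j r_{e_j}\alpha(e_j)$. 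One then expresses the $p$-value as an integer polynomial multiple of $(\prod_j r_{e_j})\tau_p$, which belongs to $\mathscr{Z}_{\Gamma,\alpha}$ because each $r_{e_j}\alpha(e_j)$ is integral. An analogous reduction works after symmetry between $p$ and $q$.

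For injectivity, I would show that modulo the relations $\mathcal{I}$ every element of $\mathbb{Z}[\Gamma,\alpha]$ admits a canonical normal form of shape $h_\Gamma + h_p\tau_p + h_q\tau_q$ with $h_\Gamma, h_p, h_q \in H^*(BT;\mathbb{Z})$: repeated application of $\tau_E\tau_F=\tau_{E\vee F}\sum_{G}\tau_G$ collapses any product of non-vertex Thom classes to a $H^*(BT)$-multiple of a single Thom class, and further multiplications with vertex Thom classes kill everything except $H^*(BT)$-multiples of $\tau_p$ and $\tau_q$ themselves (since $\tau_p(q)=\tau_q(p)=0$ forces all mixed terms to vanish via the relations). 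Vanishing of the evaluation at $p$ and $q$ then forces $h_\Gamma=0$ first, and subsequently $h_p=h_q=0$ because $\tau_p(p)$ and $\tau_q(q)$ are nonzero products of linear forms in the integral domain $H^*(BT;\mathbb{Q})$.

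The main obstacle will be the integrality bookkeeping in the surjectivity step: even when the rational problem is straightforward, intersections over $\mathbb{Z}$ of ideals generated by the $r_{e_j}\alpha(e_j)$ can behave subtly when these forms are not primitive or fail to form a $\mathbb{Z}$-basis of $\mathfrak{t}_\mathbb{Z}^*$, so one must carefully identify which integer combinations of Thom classes produce a given piecewise polynomial. I would rely on the explicit form $\alpha(e_j)=\mu_j/\det\Lambda$ from Example~\ref{ex-orbsphere} together with the Smith normal form decomposition of $\Lambda$ recalled in Remark~\ref{factor} to reduce to the diagonal case, where each edge relation decouples and the computation becomes transparent.
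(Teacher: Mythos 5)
You should first be aware that the paper contains no proof of this statement: it is quoted verbatim as \cite[Theorem 3.6]{DKS} and used as a black box, so there is no in-paper argument to compare yours against. Judged on its own, your outline has the right architecture. The evaluation map $\Phi$, the combinatorial identity showing that the elements \eqref{relation} vanish at every vertex, and the surjectivity step (peel off a global polynomial in the $x_i$ to kill the value at $q$, then observe that the remaining value at $p$ is divisible by each primitive form $r_{e_j}\alpha(e_j)$ and hence, these being pairwise non-associate irreducibles in the UFD $H^*(BT;\mathbb{Z})$, by their product $a_p\tau_p(p)$) are all sound; indeed the surjectivity step is essentially the argument the paper itself uses to prove the subsequent presentation theorem for $\mathbb{Z}[\Gamma,\alpha]$. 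Two small points you gloss over but which do work out: passing from divisibility in $\mathbb{Q}[x]$ to divisibility in $\mathbb{Z}[x]$ needs Gauss's lemma applied to the primitive forms $r_e\alpha(e)$, and the degree-two part of $\mathscr{Z}_{\Gamma,\alpha}$ really is the lattice $\Lambda^{T}\mathbb{Z}^n$ spanned by the $x_i$.

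The genuine gap is in your injectivity step. The normal form $h_\Gamma+h_p\tau_p+h_q\tau_q$ with $h_\Gamma,h_p,h_q\in H^*(BT;\mathbb{Z})$ does not follow from the relations \eqref{relation}. Those relations only rewrite a product $\tau_E\tau_F$ when $E$ and $F$ are not comparable; monomials supported on chains of faces, such as $\tau_{F_i}^2$, $\tau_{F_i}\tau_{F_i\cap F_j}$, or $\tau_p\tau_{F_i}$, are left untouched (for $E\subseteq F$ the relation degenerates to $\tau_E\tau_F=\tau_F\tau_E$). In the manifold case one can still reduce $\tau_p\tau_{F_i}$ to $\alpha(e_i)\cdot\tau_p$ because $\alpha(e_i)$ is integral, but here the needed coefficient is $\tfrac{1}{D}\mu_i(x)\notin H^*(BT;\mathbb{Z})$ in general, so the claim that ``further multiplications with vertex Thom classes kill everything except $H^*(BT)$-multiples of $\tau_p$ and $\tau_q$'' is unjustified. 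A second, compounding issue is that elements of $\mathscr{Z}_{\Gamma,\alpha}$ are only integral as totals -- an integer combination of non-integral monomials in the rational Thom classes can lie in $\mathscr{Z}_{\Gamma,\alpha}$ term-by-term-non-integrally -- so any reduction to normal form must be carried out inside $\mathscr{Z}_{\Gamma,\alpha}$, which forces a degree-by-degree lattice analysis rather than a formal rewriting argument. This is precisely the content that makes the theorem nontrivial in the orbifold setting and is the part delegated to \cite{DKS}; as written, your proof establishes surjectivity but not injectivity.
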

Therefore, together with Theorem~\ref{main1-DKS}, we have the following corollary:
%%%%%%%%%%%%%%%%%%%%%%%%%%%%%%%%%%%%%%%%%%%%%
%%% Corollary 5.11
%%%%%%%%%%%%%%%%%%%%%%%%%%%%%%%%%%%%%%%%%%%%%
\begin{corollary}
\label{main3-DKS}
If the torus orbifold $X(Q,\lambda)$ satisfies $H^{odd}(X(Q,\lambda))=0$, then its equivariant cohomology satisfies 
\begin{align*}
H^{*}_{T}(X(Q,\lambda))\cong \mathbb{Z}[\Gamma,\alpha],
\end{align*} 
where $(\Gamma,\alpha)$ is the induced orbifold GKM graph from $(Q,\lambda)$.
\end{corollary}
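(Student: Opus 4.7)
The plan is essentially to observe that this corollary is a direct composition of the two theorems just stated. The first ingredient, Theorem~\ref{main1-DKS}, provides an $H^{*}(BT;\mathbb{Z})$-algebra isomorphism
\[
H^{*}_{T}(X(Q,\lambda)) \cong H^{*}_{T}(\Gamma,\alpha)
\]
under the hypothesis $H^{odd}(X(Q,\lambda))=0$, where $(\Gamma,\alpha)$ is the orbifold torus graph of $(Q,\lambda)$. The second ingredient, the unlabeled theorem immediately preceding the corollary (from \cite[Theorem~3.6]{DKS}), gives an isomorphism of graded rings
\[
H^{*}_{T}(\Gamma,\alpha) \cong \mathbb{Z}[\Gamma,\alpha].
\]
Neither of these requires any additional hypothesis beyond what is present in the corollary's statement.

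My approach is simply to compose the two isomorphisms. First I would invoke Theorem~\ref{main1-DKS} to replace $H^{*}_{T}(X(Q,\lambda))$ by the graph equivariant cohomology $H^{*}_{T}(\Gamma,\alpha)$; then I would apply the isomorphism $H^{*}_{T}(\Gamma,\alpha)\cong \mathbb{Z}[\Gamma,\alpha]$ identifying piecewise polynomials on the orbifold torus graph with the weighted face ring via the rational Thom classes $\tau_{F}$. Chaining the two yields the asserted isomorphism
\[
H^{*}_{T}(X(Q,\lambda)) \cong \mathbb{Z}[\Gamma,\alpha].
\]

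There is essentially no obstacle, since both halves of the argument are imported verbatim from \cite{DKS}; the only thing to check is compatibility of the grading conventions, which is automatic because both isomorphisms are grading-preserving with respect to the natural degree structure inherited from $H^{*}(BT;\mathbb{Z})$. I would write the proof as a two-line chain of isomorphisms, without restating any of the underlying content, since the specialization to torus orbifolds with two fixed points over $Q=\Sigma\Delta^{n-1}$ adds no new subtlety—this corollary is purely a formal combination of the two preceding theorems under the single hypothesis $H^{odd}(X(Q,\lambda))=0$.
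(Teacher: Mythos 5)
Your proposal is correct and matches the paper exactly: the paper derives this corollary with no separate proof, simply stating that it follows by combining Theorem~\ref{main1-DKS} with the isomorphism $H^{*}_{T}(\Gamma,\alpha)\cong \mathbb{Z}[\Gamma,\alpha]$ from \cite[Theorem 3.6]{DKS}. Your two-line chain of isomorphisms is precisely the intended argument.
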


%%%%%%%%%%%%%%%%%%%%%%%%%%%%%%%%%%%%%%%%%%%%%
%%% Section 5.3
%%%%%%%%%%%%%%%%%%%%%%%%%%%%%%%%%%%%%%%%%%%%%
\subsection{The equivariant cohomology of $X(\Lambda)$ when  $H^{odd}(X(\Lambda))=0$}
\label{sect:5.3}

Now we may compute the equivariant cohomology of $X(\Lambda)$ when $H^{odd}(X(\Lambda))=0$.
If $n=1$, we may think $X(\Lambda)$ as a spindle.
Due to Section~\ref{sect:2}, 
the graph $\Gamma=(\mathcal{V}(\Gamma),\mathcal{E}(\Gamma))$ of $Q$ (manifold with two vertices) is given by 
\begin{align*}
\mathcal{V}(\Gamma)=\{p,\ q\},\quad \mathcal{E}(\Gamma)=\{e_{1},\ldots, e_{n}, \overline{e_{1}},\ldots, \overline{e_{n}}\}
\end{align*}
where $i(e_{i})=p$ and $t(e_{i})=q$ for $i=1,\ldots, n$,
and the axial function $\alpha:E(\Gamma)\to H^{2}(BT;\mathbb{Q})$ is given by equation \eqref{axial-fct-2-vertices}, i.e., 
\begin{align}
\label{axial-5.3}
\alpha(e_{i})=\alpha(\overline{e_{i}})=\frac{1}{D} \mu_{i}.
\end{align}
Then the rational Thom class $\tau_{i}:=\tau_{F_{i}}$ corresponding to a facet $F_{i}$ (the normal facet of $e_{i}$), $i=1,\ldots, n$, is an element in $\mathscr{Z}_{\Gamma,\alpha}$.
Indeed, 
\begin{align}
\label{Thom-5.3}
\tau_{i}(p)=\tau_{i}(q)=\frac{1}{D} \mu_{i}\in H^{2}(BT;\mathbb{Q}).
\end{align}
%where $D:=\det \Lambda$.
Set $\tau_{p}$ and $\tau_{q}$ as the Thom classes of the vertices, i.e., 
\begin{align}
\label{Thom-pt-5.3}
\tau_{p}(p)=\frac{1}{D^n} \prod_{i=1}^{n}\mu_{i}=\tau_q(q),\quad \tau_{p}(q)=0=\tau_q(p).
\end{align}
Let $a_{i}$ ($i=1,\ldots, n$), $a_{p}$ and $a_{q}$ be the smallest positive integers such that $a_{i}\tau_{i}, a_{p}\tau_{p}, a_{q}\tau_{q}\in H^{*}_{T}(\Gamma,\alpha)$. 
Note that %$a_{p}, a_{q}\le |D^{n}|$ and 
$a_{p}|D$ and $a_{q}|D$ by equations \eqref{axial-5.3} and \eqref{Thom-pt-5.3}.
We put
\begin{align*}
\widetilde{\tau}_{i}:=a_{i}\tau_{i}\ (i=1,\ldots, n),\quad 
\widetilde{\tau}_{p}:=a_{p}\tau_{p}\quad {\rm and}\quad \widetilde{\tau}_{q}:=a_{q}\tau_{q}.
\end{align*}
Define the matrix $\Lambda^{*}_{/\ell}$ (which modifies the matrix $\Lambda^{*}$ defined in \eqref{cofactor_matrix}) by
\begin{align*}
%\label{cofactor_matrix2}
%\Lambda^{*}=
\begin{pmatrix}
\mu_{1}/\ell_{1} \\
\vdots \\
\mu_{n}/\ell_{n}
\end{pmatrix}
=
\begin{pmatrix}
\mu_{11} & \cdots & \mu_{1n} \\
\vdots & \ddots & \vdots \\
\mu_{n1} & \cdots & \mu_{nn}
\end{pmatrix}
\end{align*}
where $\ell_{i}$ is the greatest common divisor of the entires of the row vector $\mu_{i}$.
It is easy to see that $\ell_{i}=D/a_{i}$.
Then we have the following diagonal matrix
\begin{align}
\label{key-matrix}
\Lambda^{*}_{/\ell} \Lambda
=\begin{pmatrix}
    a_{1} & & \\
    & \ddots & \\
    & & a_{n}
  \end{pmatrix}.
\end{align}

%%%%%%%%%%%%%%%%%%%%%%%%%%%%%%%%%%%%%%%%%%%%%
%%% Theorem 5.12
%%%%%%%%%%%%%%%%%%%%%%%%%%%%%%%%%%%%%%%%%%%%%
\begin{theorem}
Let $(\Gamma,\alpha)$ be the orbifold torus graph induced from an orbifold characteristic pair $(\Sigma\Delta^{n-1},\lambda)$. Then
\begin{align*}
\mathbb{Z}[\Gamma,\alpha]\cong \mathbb{Z}[x_{1},\ \ldots,\ x_{n},\ \widetilde{\tau}_{p},\ \widetilde{\tau}_{q}]/\langle \mu_{1}(x)\cdots\mu_{n}(x)-(\widetilde{\tau}_{p}+\widetilde{\tau}_{q}),\ \widetilde{\tau}_{p}\widetilde{\tau}_{q} \rangle,
\end{align*}
where $x_{i}=\lambda_{i1}\tau_1+\dots+\lambda_{in}\tau_n$ $($see \eqref{matrix}$)$ and 
$\mu_{i}(x)=\mu_{i1}x_1+\dots+\mu_{in}x_n$.
\end{theorem}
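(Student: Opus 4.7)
The plan is to construct the obvious candidate ring homomorphism
\[
\Phi\colon \mathbb{Z}[x_{1},\ldots,x_{n},Y_{p},Y_{q}] \to \mathbb{Z}[\Gamma,\alpha], \qquad x_{i}\mapsto x_{i}\cdot\mathbf{1},\quad Y_{p}\mapsto\widetilde{\tau}_{p},\quad Y_{q}\mapsto\widetilde{\tau}_{q},
\]
verify that both defining relations lie in $\ker\Phi$, and then show that $\Phi$ is surjective with kernel exactly the stated ideal. The fundamental simplification is that since $|\mathcal{V}(\Gamma)|=2$, the evaluation map $\mathbb{Z}[\Gamma,\alpha]\hookrightarrow H^{*}(BT;\mathbb{Z})\oplus H^{*}(BT;\mathbb{Z})$, $f\mapsto(f(p),f(q))$, is injective, so every equality and every divisibility check reduces to vertex-wise arithmetic in $\mathbb{Z}[x_{1},\ldots,x_{n}]$.

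The key preparatory computation is to pin down $a_{p}$ and $a_{q}$. Noting that in the theorem's notation $\mu_{i}(x)$ is the primitive representative $\mu_{i}/\ell_{i}$ of the $i$-th row of $\Lambda^{*}$, and that the invertibility of $\Lambda^{*}$ makes these $n$ linear forms pairwise non-proportional (hence pairwise coprime in the UFD $\mathbb{Z}[x_{1},\ldots,x_{n}]$), iterated application of Gauss's lemma yields that $\prod_{i}\mu_{i}(x)$ is itself primitive in $\mathbb{Z}[x_{1},\ldots,x_{n}]$. Since the Thom class satisfies $\tau_{p}(p)=(\prod_{i}\ell_{i}/D^{n})\prod_{i}\mu_{i}(x)$, this primitivity forces the minimal rescaling to be $a_{p}=D^{n}/\prod_{i}\ell_{i}=\prod_{i}a_{i}$, and symmetrically $a_{q}=\prod_{i}a_{i}$. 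Consequently
\[
\widetilde{\tau}_{p}(p)=\widetilde{\tau}_{q}(q)=\prod_{i=1}^{n}\mu_{i}(x),\qquad \widetilde{\tau}_{p}(q)=\widetilde{\tau}_{q}(p)=0,
\]
from which both relations $\widetilde{\tau}_{p}\widetilde{\tau}_{q}=0$ and $\prod_{i}\mu_{i}(x)=\widetilde{\tau}_{p}+\widetilde{\tau}_{q}$ follow by direct comparison at each vertex, so $\Phi$ descends to the claimed quotient.

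For surjectivity I take an arbitrary $f\in\mathbb{Z}[\Gamma,\alpha]$ and set $g:=f-f(p)\cdot\mathbf{1}$. Then $g(p)=0$, and the edge congruence along $e_{i}$ (with $r_{e_{i}}\alpha(e_{i})=\mu_{i}(x)$, since $\ell_{i}\mid D$) says $\mu_{i}(x)\mid g(q)$ in $\mathbb{Z}[x_{1},\ldots,x_{n}]$; pairwise coprimality and unique factorization then give $\prod_{i}\mu_{i}(x)\mid g(q)$, and writing $g(q)=h(x)\prod_{i}\mu_{i}(x)$ exhibits $g$ as $h\cdot\widetilde{\tau}_{q}$. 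Hence $f=f(p)\cdot\mathbf{1}+h\cdot\widetilde{\tau}_{q}$ lies in the image of $\Phi$.

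For injectivity, the two relations let one reduce any element of $\mathbb{Z}[x_{1},\ldots,x_{n},Y_{p},Y_{q}]$ modulo the ideal to a normal form $F_{0}(x)+F_{1}(x)Y_{q}$: first eliminate $Y_{p}$ via $Y_{p}=\prod_{i}\mu_{i}(x)-Y_{q}$, then use $Y_{q}^{2}=\prod_{i}\mu_{i}(x)\,Y_{q}$ (a consequence of combining both relations) to cut the $Y_{q}$-degree to at most one. If such a normal form is in $\ker\Phi$, evaluation at $p$ forces $F_{0}=0$, and evaluation at $q$ forces $F_{1}\cdot\prod_{i}\mu_{i}(x)=0$, whence $F_{1}=0$. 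Thus $\ker\Phi$ is exactly the stated ideal. The main obstacle is the primitivity step producing $a_{p}=a_{q}=\prod_{i}a_{i}$; once that is in hand, the two-vertex structure reduces everything else to routine bookkeeping in $\mathbb{Z}[x_{1},\ldots,x_{n}]$.
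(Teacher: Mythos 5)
Your proposal is correct and follows essentially the same route as the paper: identify $x_{1},\dots,x_{n},\widetilde{\tau}_{p},\widetilde{\tau}_{q}$ as generators by observing that the $x_{i}$ evaluate to a basis of $H^{2}(BT;\mathbb{Z})$ and that divisibility of $f(p)-f(q)$ by the pairwise coprime primitive forms $\mu_{i}(x)$ forces divisibility by their product, then establish the two stated relations. Your explicit determination of $a_{p}=a_{q}=\prod_{i}a_{i}$ via Gauss's lemma and your normal-form argument showing the kernel is no larger than the stated ideal merely fill in steps the paper leaves implicit (``it follows easily''), so the underlying strategy is identical.
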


\begin{proof}
 The Thom classes corresponding to the facets $\tau_i=\tau_{F_i}$ are given by the global polynomials defined in equation \eqref{Thom-5.3}.
Recall from \cite[Remark 4.9 (1)]{DKS} the elements $x_{1},\ldots, x_{n}$ are the elements in $\mathbb{Z}[\Gamma,\alpha]$.
In particular, they form a basis in $H^{2}(BT;\mathbb{Z})$,
by which any global element in $\mathbb{Z}[\Gamma,\alpha]$ is generated.
%We can generate all the integral global polynomials in $H^*_T(\Gamma,\alpha)$ by noticing that 
%\begin{align*}
%\lambda_{i1}\tau_1+\dots+\lambda_{in}\tau_n=x_i,
%\end{align*}
% for $i=1,\dots,n$, where $H^*(BT;\Z)\cong \Z[x_1,\dots,x_n]$. 

Since any $g\in \Z[\Gamma,\alpha]$ with $\deg g<2n$ is
 a global polynomial, it is generated by $x_1,\dots,x_n$. 
If $f\in H^{*}_{T}(\Gamma,\alpha)$ has a degree $\ge 2n$, then there is the integral global polynomial $g\in \Z[x_1,\dots,x_n]$ such that $f(q)-g(q)=0$. Put $h=f-g$. Then it follows from Definition~\ref{eq-GKM} that $h(p)$ is divisible by $a_{p}\tau_{p}(p)(=:\widetilde{\tau}_{p})$. Therefore, there exists a global element $g'\in \Z[x_1,\dots,x_n]$ such that $h=g'\widetilde{\tau}_{p}$. This shows that any $f\in H^{*}_{T}(\Gamma,\alpha)$ is generated by $x_{i}$ ($i=1,\ldots, n$), $\widetilde{\tau}_{p}$ and $\widetilde{\tau}_{q}$.

It follows easily from equation \eqref{relation} that all relations can be generated by $\widetilde{\tau}_{p}\widetilde{\tau}_{q}$
 and $\widetilde{\tau}_{1}\cdots \widetilde{\tau}_{n}-(\widetilde{\tau}_{p}+\widetilde{\tau}_{q})$.
On the other hand, equation \eqref{key-matrix} tells us that $\widetilde{\tau}_{i}=\mu_{i}(x)$.
Therefore, we establish the relations.
\end{proof}

Consequently, we have the following corollary:
%%%%%%%%%%%%%%%%%%%%%%%%%%%%%%%%%%%%%%%%%%%%%
%%% Corollary 5.13
%%%%%%%%%%%%%%%%%%%%%%%%%%%%%%%%%%%%%%%%%%%%%
\begin{corollary}
\label{eq-cohom-orb}
If $H^{odd}(X(\Lambda))=0$, then 
\begin{align*}
H^{*}_{T}(X(\Lambda))\cong  
\mathbb{Z}[\tau_{1},\ \cdots,\ \tau_{n},\ \tau_{p},\ \tau_{q}]/\langle \tau_{1}\cdots\tau_{n}-(\tau_{p}+\tau_{q}),\ \tau_{p}\tau_{q} \rangle,
\end{align*}
where $\deg \tau_{i}=2$, $i=1,\ldots,n$, and $\deg \tau_{p}=\deg \tau_{q}=2n$.
\end{corollary}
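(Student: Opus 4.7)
The approach is to chain together the two isomorphisms established earlier in this section. Since $H^{\mathrm{odd}}(X(\Lambda))=0$ by hypothesis, Corollary~\ref{main3-DKS} immediately supplies $H^{*}_{T}(X(\Lambda))\cong \mathbb{Z}[\Gamma,\alpha]$, where $(\Gamma,\alpha)$ is the orbifold torus graph of $(\Sigma\Delta^{n-1},\lambda)$. The theorem just proved then identifies the weighted face ring as $\mathbb{Z}[\Gamma,\alpha]\cong \mathbb{Z}[x_{1},\ldots,x_{n},\widetilde{\tau}_{p},\widetilde{\tau}_{q}]/\langle \mu_{1}(x)\cdots\mu_{n}(x)-(\widetilde{\tau}_{p}+\widetilde{\tau}_{q}),\ \widetilde{\tau}_{p}\widetilde{\tau}_{q}\rangle$, so the task reduces to rewriting this presentation in the form stated in the corollary.

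To do this I would change generators by setting $\tau_{i}:=\widetilde{\tau}_{i}$ for $i=1,\ldots,n$ and $\tau_{p}:=\widetilde{\tau}_{p}$, $\tau_{q}:=\widetilde{\tau}_{q}$. Under the identification $\widetilde{\tau}_{i}=\mu_{i}(x)$ afforded by the matrix identity $\Lambda^{*}_{/\ell}\Lambda=\mathrm{diag}(a_{1},\ldots,a_{n})$ from \eqref{key-matrix}, the first relation $\mu_{1}(x)\cdots\mu_{n}(x)-(\widetilde{\tau}_{p}+\widetilde{\tau}_{q})$ becomes $\tau_{1}\cdots\tau_{n}-(\tau_{p}+\tau_{q})$, and the second relation $\widetilde{\tau}_{p}\widetilde{\tau}_{q}$ becomes $\tau_{p}\tau_{q}$. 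The degree assertions $\deg\tau_{i}=2$ and $\deg\tau_{p}=\deg\tau_{q}=2n$ follow from the grading on $\mathbb{Z}[\Gamma,\alpha]$, since rational Thom classes of codimension-$k$ faces have degree $2k$ and the two vertices of $\Sigma\Delta^{n-1}$ are codimension-$n$ faces.

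The main point that requires care is verifying that this relabelling genuinely yields an isomorphism of $\mathbb{Z}$-algebras, i.e.\ that $\tau_{1},\ldots,\tau_{n},\tau_{p},\tau_{q}$ may be treated as independent polynomial generators subject only to the two displayed relations. I would carry out this verification by reusing the degree stratification from the proof of the preceding theorem: in every degree strictly less than $2n$ each element of $\mathbb{Z}[\Gamma,\alpha]$ is a global polynomial and hence lies in the subring generated by the $\tau_{i}$'s via \eqref{key-matrix}, whereas the classes $\tau_{p},\tau_{q}$ appear only in degrees $\ge 2n$ and are constrained precisely by $\tau_{p}\tau_{q}=0$ together with $\tau_{1}\cdots\tau_{n}=\tau_{p}+\tau_{q}$. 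The delicate bookkeeping is to check that passing from the $\{x_{i}\}$-presentation to the $\{\tau_{i}\}$-presentation introduces no hidden integrality condition beyond these two relations, which is where I expect to spend the most effort.
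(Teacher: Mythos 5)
Your proposal follows exactly the paper's route: the paper derives Corollary~\ref{eq-cohom-orb} purely by combining Corollary~\ref{main3-DKS} with the presentation of $\mathbb{Z}[\Gamma,\alpha]$ from the preceding theorem and renaming $\widetilde{\tau}_{i}=\mu_{i}(x)$, $\widetilde{\tau}_{p}$, $\widetilde{\tau}_{q}$ as $\tau_{i}$, $\tau_{p}$, $\tau_{q}$, which is precisely what you do. The one point you flag but do not resolve --- that replacing the basis $x_{1},\dots,x_{n}$ of $H^{2}(BT;\mathbb{Z})$ by the classes $\widetilde{\tau}_{i}=\mu_{i}(x)$ (a change of generators governed by $\Lambda^{*}_{/\ell}$, which need not lie in $GL_{n}(\mathbb{Z})$ a priori) must introduce no further integrality conditions --- is likewise left implicit in the paper, so your argument is no less complete than the original.
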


%%%%%%%%%%%%%%%%%%%%%%%%%%%%%%%%%%%%%%%%%%%%%
%%% Remark 5.14
%%%%%%%%%%%%%%%%%%%%%%%%%%%%%%%%%%%%%%%%%%%%%
\begin{remark}
If $G(\Lambda)=\{e\}$ then $X(\Lambda)=\mathbb{S}^{2n}$, i.e., a torus manifold.
Therefore, we 
can also obtain the above fact for the case of the standard $2n$-dimensional sphere by using the main theorem of \cite{MMP}.
\end{remark}

%%%%%%%%%%%%%%%%%%%%%%%%%%%%%%%%%%%%%%%%%%%%%
%%% Section 5.4
%%%%%%%%%%%%%%%%%%%%%%%%%%%%%%%%%%%%%%%%%%%%%
\subsection{The equivariant cohomology of $S^{2}(m,n)$.}
\label{sect:5.4}
In this final subsection, we apply Corollary~\ref{eq-cohom-orb} to the case of spindles.

Recall the spindle $S^{2}(m,n)$.
This is homeomorphic to $S^{2}$, therefore, $H^{odd}(S^{2}(m,n))=0$.
The orbifold torus graph of $S^{2}(m,n)$ is the one defined in Figure~\ref{spindle_GKM_fig}.
%\begin{figure}[H]
%\begin{center}
%\includegraphics[width=100pt,clip]{spindle_graph2.eps}
%\caption{The orbifold torus graph of $([-1,1], \lambda_{m,n})$.}
%\label{spindle_GKM_fig2}
%\end{center}
%\end{figure}
In this case, the rational Thom classes for the two vertices $p(=-1)$ and $q(=1)$ are defined as follows:
\begin{align*}
%\begin{array}{ll}
\tau_{p}(p)=\frac{1}{m}x,\ \tau_{p}(q)=0 \quad {\rm and}\quad  
\tau_{q}(p)=0,\ \tau_{q}(q)=\frac{1}{n}x,\quad {\rm respectively.}
%\end{array}
\end{align*}
Therefore, by applying Corollary~\ref{eq-cohom-orb}, we have:
%we have that 
%\begin{align*}
%\mathscr{Z}_{m,n}:=\\mathscr{Z}_{([-1,1], \lambda_{m,n})}\cong \mathbb{Z}[m\tau_{p}, n\tau_{q}, f(\tau_{p},\tau_{q})\tau_{p}\tau_{q}\ |\ f(\tau_{p},\tau_{q})\in \mathbb{Z}[\tau_{p},\tau_{q}] ].
%\end{align*}
%(Note that the generators are duplicated.)
%Hence, with the method similar to the one demonstrated in Section~\ref{sect:4.1}, it is easy to show the following theorem:
%%%%%%%%%%%%%%%%%%%%%%%%%%%%%%%%%%%%%%%%%%%%%
%%% Corollary 5.15
%%%%%%%%%%%%%%%%%%%%%%%%%%%%%%%%%%%%%%%%%%%%%
\begin{corollary}
The $T^{1}$-equivariant cohomology of the spindle $S^{2}(m,n)$ is isomorphic to the following ring:
\begin{align*}
H_{T}^{*}(S^{2}(m,n))
\cong
\mathscr{Z}_{m,n}/\mathcal{I}
\cong
\mathbb{Z}[m\tau_{p}, n\tau_{q}]/\langle mn\tau_{p}\tau_{q} \rangle,
\end{align*}
where $\deg \tau_{p}=\deg \tau_{q}=2$.
\end{corollary}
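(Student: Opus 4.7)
The plan is to reduce the statement to a direct computation of the weighted face ring $\mathscr{Z}_{m,n}/\mathcal{I}$ associated to the specific orbifold torus graph in Example~\ref{spindle_GKM}. The first isomorphism is essentially free: since the underlying topological space of $S^2(m,n)$ is $S^2$, its ordinary cohomology vanishes in odd degrees, so Corollary~\ref{main3-DKS} yields $H_T^*(S^2(m,n)) \cong \mathbb{Z}[\Gamma,\alpha] = \mathscr{Z}_{m,n}/\mathcal{I}$. All the remaining work lies in identifying $\mathscr{Z}_{m,n}/\mathcal{I}$ with $\mathbb{Z}[m\tau_p, n\tau_q]/\langle mn\tau_p\tau_q\rangle$.

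Next I would spell out the rational Thom classes directly from their definitions using the axial function $\alpha(e)=x/m$, $\alpha(\bar e)=x/n$ of Figure~\ref{spindle_GKM_fig}. This gives $\tau_p(p)=x/m$, $\tau_p(q)=0$, $\tau_q(p)=0$, $\tau_q(q)=x/n$. Because each of $\tau_p$ and $\tau_q$ vanishes on the opposite vertex, the smallest positive integer scalings that land in $H^*(BT;\mathbb{Z})$ at every vertex are $m\tau_p$ and $n\tau_q$; these are the natural integral generators of the weighted face ring. Moreover, the two vertices $\{p\}$ and $\{q\}$ are disjoint faces, so applying the relation \eqref{relation} with $E=\{p\}$, $F=\{q\}$ (so $E\cap F=\emptyset$ and $\tau_\emptyset=0$) produces $\tau_p\tau_q \in \mathcal{I}$, equivalently $(m\tau_p)(n\tau_q)=mn\tau_p\tau_q \in \mathcal{I}$.

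To finish, I would show every element of $\mathscr{Z}_{m,n}$ is, modulo $\mathcal{I}$, a $\mathbb{Z}$-polynomial in $m\tau_p$ and $n\tau_q$, and that $mn\tau_p\tau_q$ is the only relation needed. Writing a general element as $\sum c_{ij}\tau_p^i\tau_q^j$, evaluation at $p$ yields $c_{00}+\sum_{i\geq 1}c_{i0}(x/m)^i$ and at $q$ yields $c_{00}+\sum_{j\geq 1}c_{0j}(x/n)^j$. The integrality condition then forces $m^i\mid c_{i0}$ and $n^j\mid c_{0j}$, so the terms with $i=0$ or $j=0$ are already polynomial expressions in $m\tau_p$ and $n\tau_q$, while all cross terms with $i,j\geq 1$ lie in the ideal generated by $\tau_p\tau_q$. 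Conversely, an element of $\mathbb{Z}[m\tau_p, n\tau_q]$ that vanishes at both vertices must have every monomial $(m\tau_p)^i(n\tau_q)^j$ with $i=0$ or $j=0$ appearing with coefficient zero, so it is a multiple of $(m\tau_p)(n\tau_q)$. This gives the principal relation $\langle mn\tau_p\tau_q\rangle$ and completes the isomorphism.

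The content is mostly bookkeeping rather than a deep argument; the only step requiring any care is the final check that no additional relations beyond $mn\tau_p\tau_q$ appear when passing from the rational Thom classes to the integral generators $m\tau_p$ and $n\tau_q$. This is handled by the two separate evaluations at $p$ and $q$, which decouple because each integral generator vanishes on the opposite vertex.
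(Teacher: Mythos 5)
Your proposal is correct, and it reaches the conclusion by a more self-contained route than the paper. Both you and the paper begin the same way: $H^{\mathrm{odd}}(S^{2}(m,n))=0$ because the underlying space is $S^{2}$, Corollary~\ref{main3-DKS} identifies $H^{*}_{T}(S^{2}(m,n))$ with the weighted face ring, and the rational Thom classes are read off from the axial function of Figure~\ref{spindle_GKM_fig} exactly as you state. Where you diverge is the last step: the paper simply specializes the general presentation of Corollary~\ref{eq-cohom-orb} (itself derived in Section~\ref{sect:5.3} from the matrix $\Lambda^{*}_{/\ell}\Lambda$ and the relations \eqref{relation}), whereas you recompute $\mathscr{Z}_{m,n}$ and the quotient by hand for the one-dimensional case. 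Your version buys transparency — the reader sees directly why the integral generators are $m\tau_{p}$, $n\tau_{q}$ and why the only relation is $mn\tau_{p}\tau_{q}$, including the converse check that nothing else dies — at the cost of not exhibiting the statement as an instance of the general $n$-dimensional formula. Note also that for $n=1$ the facets of $[-1,1]$ \emph{are} the two vertices, so the facet classes and vertex classes coincide; your treatment of $\tau_{p},\tau_{q}$ purely as vertex classes quietly absorbs the relation $\tau_{1}=\tau_{p}+\tau_{q}$ that appears in the paper's general presentation.

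One step deserves more care: your claim that every cross term $\tau_{p}^{i}\tau_{q}^{j}$ with $i,j\geq 1$ "lies in the ideal generated by $\tau_{p}\tau_{q}$" is true in the ambient polynomial ring $\mathbb{Z}[\tau_{p},\tau_{q}]$, but not in $\mathscr{Z}_{m,n}$ itself: for instance $\tau_{p}^{2}\tau_{q}=\tau_{p}\cdot(\tau_{p}\tau_{q})$ with $\tau_{p}\notin\mathscr{Z}_{m,n}$ when $|m|>1$, and no element of $\mathscr{Z}_{m,n}$ times $\tau_{p}\tau_{q}$ equals $\tau_{p}^{2}\tau_{q}$. The cleanest fix, consistent with Theorem~\ref{main1-DKS}, is to observe that all such cross terms evaluate to $0$ at both vertices and therefore map to $0$ in $H^{*}_{T}(\Gamma,\alpha)$, so they contribute nothing to the image; your two decoupled evaluations at $p$ and $q$ already contain everything needed for this. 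This is a presentational subtlety inherited from how $\mathcal{I}$ is set up, not a flaw in your strategy.
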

We note that this is isomorphic to the equivariant cohomology of the standard $T^{1}$-action on $\mathbb{S}^{2}$.

\end{document}